\documentclass[reqno,11pt]{amsart}
\baselineskip=7.0mm
\usepackage{amssymb, amsmath}
\usepackage{graphicx}
\usepackage{cite}

\usepackage{subfig, tikz}
\usetikzlibrary{decorations.pathmorphing}
\usepackage{graphics}

\newcommand{\koniec}{\begin{flushright}  $\Box $ \end{flushright}}

\newcommand{\td}{\text{d}}
\newcommand{\Rho}{\mathrm{P}}
\newcommand{\R}{\mathbb{R}}
\newcommand{\Z}{\mathbb{Z}}
\newcommand{\RP}{\mathbb{RP}}

\usepackage{bm}
\usepackage{enumerate}
\theoremstyle{plain}
\newtheorem{theorem}{Theorem}[section]
\newtheorem{lemma}[theorem]{Lemma}
\newtheorem{prop}[theorem]{Proposition}
\newtheorem{cor}[theorem]{Corollary}

\theoremstyle{definition}
\newtheorem{remark}[theorem]{Remark}
\newtheorem{definition}[theorem]{Definition}

\def\be{\begin{equation}}
\def\ee{\end{equation}}
\def\bea{\begin{eqnarray}}
\def\eea{\end{eqnarray}}

\def\xb{X^\flat}
\def\z{\zeta}
\def\zb{\bar{\zeta}}

\def\p{\partial}

\DeclareMathOperator{\arcsinh}{arcsinh}

\newcounter{mnotecount}[section]

\renewcommand{\themnotecount}{\thesection.\arabic{mnotecount}}

\newcommand{\mnote}[1]
{\protect{\stepcounter{mnotecount}}$^{\mbox{\footnotesize
$
\bullet$\themnotecount}}$ \marginpar{
\raggedright\tiny\em
$\!\!\!\!\!\!\,\bullet$\themnotecount: #1} }

\def\theequation{\thesection.\arabic{equation}}
\numberwithin{equation}{section}

\setlength{\textwidth}{6.6in} \setlength{\textheight}{8.6in}
\hoffset=-0.83truein
\voffset=-0.1truein
\begin{document}
\title{New quasi-Einstein metrics on a two-sphere}

\author{Alex Colling}
\address{Department of Applied Mathematics and Theoretical Physics\\ 
University of Cambridge\\ Wilberforce Road, Cambridge CB3 0WA, UK.}
\email{aec200@cam.ac.uk}
\author{Maciej Dunajski}
\address{Department of Applied Mathematics and Theoretical Physics\\ 
University of Cambridge\\ Wilberforce Road, Cambridge CB3 0WA, UK.}
\email{m.dunajski@damtp.cam.ac.uk}

\author{Hari Kunduri}
\address{Department of Mathematics and Statistics and Department of Physics and Astronomy\\
McMaster University\\ Hamilton  Ontario L8S 4M1 Canada}
\email{kundurih@mcmaster.ca }

\author{James Lucietti}
\address{
School of Mathematics and Maxwell Institute for Mathematical Sciences\\ University of Edinburgh\\
King’s Buildings, Edinburgh, EH9 3JZ, UK.}
\email{j.lucietti@ed.ac.uk}


\begin{abstract}
We construct 
all axi-symmetric non-gradient $m$-quasi-Einstein structures on a two-sphere.
This includes the spatial cross-section of the extreme Kerr black hole horizon corresponding to $m=2$, as well as a family of new regular metrics with $m\neq 2$ given in terms of hypergeometric functions. 
We also show that in the case $m=-1$ with vanishing cosmological constant the only  orientable compact
solution in dimension two is the flat torus, which proves that there are no compact surfaces with a metrisable
affine connection with skew Ricci tensor. 
\end{abstract}

\maketitle

\section{Introduction}
A Riemannian manifold $(M, g)$ of dimension $n$ is said to be $m$-quasi-Einstein if there exists a vector field $X \in \mathfrak{X}(M)$ and constant $\lambda$ such that
\be
\label{QEE}
\mbox{Ric}(g)=\frac{1}{m}X^\flat\otimes X^\flat
-\frac{1}{2}{\mathcal L}_{X} g+\lambda g
\ee
where ${\mathcal L}_X$ is the Lie derivative,
the one-form $X^\flat$ is $g$-dual to $X$ with respect to the metric $g$, and
$m$ is a non-zero constant.  Two specific cases of interest are $m=2$,  where $(\ref{QEE})$ is the near horizon equation arising on spatial cross sections of extremal black hole horizons in General Relativity with cosmological constant $\lambda$ \cite{LP1, LP2, KL9, KL13}, and $m=1-n, \lambda=0$, which characterises
Levi-Civita connections  projectively equivalent
to affine connections with skew-symmetric Ricci tensor \cite{nur}.

 For generic $m$ a number of general results which constrain the topology and geometry of quasi-Einstein manifolds have been obtained in the case  the vector field $X$ is a gradient \cite{case, gradientref1, gradientref2} and more generally in the case  $X^\flat$ is closed \cite{Bahuaud:2022iao}  (see also
 \cite{CRT} if $m=2$).  In particular all
solutions to (\ref{QEE}) on a two-sphere $S^2$ are trivial (that is, $X$ vanishes identically) in this case. 
If $X^\flat$ is not closed and 
$m=2$, then the most general solution to
(\ref{QEE}) on $S^2$ belongs to the explicit two-parameter family of solutions arising from the extreme Kerr black hole horizon, which are parametrised by the mass and the cosmological constant~ \cite{DL23}. These solutions are all axi-symmetric.
 
 In this paper we ask the simple question: are there any non-trivial examples of  $m$-quasi-Einstein manifolds on $M=S^2$ for $m\neq 2$?   To the best of our knowledge no such example has been written down in the literature. Our main result is the following.
\begin{theorem}
\label{main_theo_md}
Let $(g, X)$ be a solution to the $m$-quasi-Einstein equation (\ref{QEE}) on a two-dimensional connected surface $M$, with $\mathrm{d} X^\flat$ not identically zero, and a $U(1)$ isometric action.
\begin{itemize}
\item
Locally there exist coordinates $(x, \phi)$ on $M$, and a function $B=B(x)$ such that
the quasi-Einstein structure is of the form
\be
\label{B_met}
g=B(x)^{-1}\mathrm{d}x^2+B(x) \mathrm{d}\phi^2, \quad 
\xb=\frac{-m}{x^2+1}\Big(x\mathrm{d} x-B(x) \mathrm{d}\phi\Big),
\ee
with
\be
\label{hyper_geom}
B(x)=
\begin{cases}
b x(x^2+1)^{-m/2}+c (x^2+1)^{-m/2} F(x)
-\frac{\lambda(x^2+1)}{m+1} & \text{for $m\neq -1$}\\
     x\Big(b-\lambda\arcsinh(x)\Big)\sqrt{x^2+1}+c(x^2+1)        & \text{for $m=-1$}
		 \end{cases}
\ee
where $b, c$ are constants and
\be
\label{hg_function}
F(x)\equiv _2F_1\Big(-\frac{1}{2}, -\frac{m}{2}, \frac{1}{2}, -x^2\Big)
\ee
is the hypergeometric function.
\item If $b=0$ and
\begin{center}
    \begin{tabular}{c|c|c|c}
   & $\lambda=0$ & $\lambda>0$ &$\lambda<0$\\
   \hline
 $m>0$ & $c>0$ & $c>\frac{\lambda}{m+1}$ &$c>\frac{|\lambda|}{m+1} c_0$\\  
 $m\in(-1,0)$ & - & $c>\frac{\lambda}{m+1}$ &-\\  
 $m=-1$ & - &  $c>0$ & -\\
 $m<-1$  & - & $c\in\Big(\frac{\lambda}{m+1},0\Big)$ &-
    \end{tabular}
\end{center}    
where
\[
c_0=\min_{x>x_0} \frac{(x^2+1)^{\frac{m}{2}+1}}{|F(x)|}
\]
and $x_0$ is the unique positive zero of $F$,
then (\ref{B_met}) smoothly extends to $S^2$.
\item Conversely all solutions to (\ref{QEE}) on $S^2$ 
with a $U(1)$ isometric action arise  from (\ref{B_met}) with
(\ref{hyper_geom}) and $b=0$, together with the restrictions on $c$ given in the above table.
\end{itemize}
\end{theorem}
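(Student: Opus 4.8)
The plan is to start from the $m$-quasi-Einstein equation (\ref{QEE}) on a connected surface $M$ with a $U(1)$ action whose Killing field we call $K$, and to use the isometry to adapt coordinates. First I would note that in two dimensions the Ricci tensor is $\mathrm{Ric}(g) = \tfrac12 R\, g$ where $R$ is the scalar curvature, so (\ref{QEE}) becomes an equation relating $R$, the symmetrised covariant derivative of $X^\flat$, and $X^\flat\otimes X^\flat$. Decompose $X$ along and orthogonal to the Killing orbits; since $\mathcal{L}_K g = 0$ and (\ref{QEE}) is $U(1)$-invariant, one expects $\mathcal{L}_K X = 0$ as well (this should follow by averaging or by a uniqueness argument for the equation's invariance), so $X$ is also axisymmetric. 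Introduce an isothermal-type coordinate adapted to $K$: write $g = B^{-1}\mathrm{d}x^2 + B\,\mathrm{d}\phi^2$ for some function $B = B(x)$ (with $K = \partial_\phi$), which is the standard normal form for a metric with one Killing vector in two dimensions once one uses $x$ as (a reparametrisation of) the moment map / arclength variable. The condition that $\mathrm{d}X^\flat\not\equiv 0$ is what forces the nontrivial $\mathrm{d}\phi$-component of $\xb$ in (\ref{B_met}); a closed $X^\flat$ would reduce to the known trivial case.

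The next step is to plug this ansatz into (\ref{QEE}) and extract the ODE system for $B(x)$ and the components of $X$. With $g$ in the form above, $R = -B''(x)$ (up to sign conventions), and the trace and trace-free parts of (\ref{QEE}) give a coupled system. The key algebraic input is that the $U(1)$-invariant, non-closed structure of $X^\flat$ is essentially rigid: writing $\xb = P(x)\,\mathrm{d}x + Q(x)\,\mathrm{d}\phi$, the $\mathrm{d}x\,\mathrm{d}\phi$ component of (\ref{QEE}) (which has no Ricci contribution) relates $\mathcal{L}_X g$'s off-diagonal part to $PQ$, and one finds $Q$ is determined by $B$ and $P$ by a first-order relation. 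I would then show the remaining equations collapse to a single linear second-order ODE for $B(x)$ after the substitution that produces the factor $(x^2+1)$; the appearance of $x^2+1$ and the hypergeometric equation with parameters $(-\tfrac12, -\tfrac m2, \tfrac12)$ in variable $-x^2$ is then a direct integration. The $m = -1$ case is the confluent/degenerate point where $1/(m+1)$ blows up, and one solves that ODE separately, obtaining the $\arcsinh$ solution. This establishes the first bullet (the local form), granting the normal-form reduction.

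For the second bullet I would analyse when the local metric (\ref{B_met}) closes up smoothly to $S^2$. A metric $B^{-1}\mathrm{d}x^2 + B\,\mathrm{d}\phi^2$ extends to $S^2$ precisely when $B$ is positive on an open interval $(x_-, x_+)$, vanishes simply at the two endpoints, and the conical-angle condition $|B'(x_\pm)| = 2$ (for $\phi$ of period $2\pi$) holds at each pole — equivalently, one rescales $\phi$ to fix one pole and then the condition at the other pole plus regularity of $\xb$ (the coefficient $-m/(x^2+1)$ and the vanishing of $B$ at the poles make $\xb$ automatically smooth once $B$ is) pins down the parameters. Setting $b = 0$ is forced by requiring $B$ to have the right symmetry/behaviour so that both poles can be accommodated — with $b\neq 0$ the two potential zeros of $B$ fail to be compatible with a single $\phi$-period, i.e. one gets an unavoidable conical singularity. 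With $b = 0$, $B(x) = c(x^2+1)^{-m/2}F(x) - \lambda(x^2+1)/(m+1)$, and I would study the zero set of this function as $c$ and $\lambda$ vary: $F$ is even, $F(0) = 1 > 0$, and $F$ has a unique positive zero $x_0$ (monotonicity of $F$ on $x>0$ should follow from the series or the ODE for the relevant sign of $m$), so the table entries are exactly the ranges of $c$ for which $B$ is positive on a symmetric interval $(-x_+, x_+)$ with simple zeros at the ends; the threshold $c_0 = \min_{x>x_0}(x^2+1)^{m/2+1}/|F(x)|$ in the $m>0, \lambda<0$ case is precisely the borderline where the $\lambda$-term, which is now negative and growing, first fails to be dominated. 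The automatic satisfaction of the angle condition $|B'| = 2$ at both poles — given the ODE $B$ satisfies and a suitable normalisation of $x$ — is what makes the extension smooth rather than merely orbifold.

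The third bullet is the converse: every $U(1)$-invariant solution on $S^2$ arises this way. Here I would argue that on compact $S^2$ the $U(1)$ action has exactly two fixed points (the poles), the quotient is an interval, and the normal-form coordinates of the first bullet are globally valid on $S^2$ minus the poles; hence the solution is globally (\ref{B_met}) with $B$ as in (\ref{hyper_geom}), and smoothness at the poles forces $b = 0$ and the constraints on $c$ from the second bullet (running the regularity analysis in reverse). The main obstacle I anticipate is the regularity/closing-up analysis: proving that $b = 0$ is necessary, correctly deriving the conical-angle condition from the ODE (so that smoothness at \emph{one} pole is automatic after a reparametrisation but at the \emph{second} pole becomes a genuine constraint), and establishing the sign and monotonicity facts about $F(x)$ — in particular the uniqueness of the positive zero $x_0$ and the exact form of the threshold $c_0$ — across all the sign ranges of $m$ (the cases $m>0$, $m\in(-1,0)$, $m=-1$, $m<-1$ behave genuinely differently because of the $(x^2+1)^{-m/2}$ weight and the sign of $1/(m+1)$). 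The reduction to the ODE and its hypergeometric integration, by contrast, I expect to be essentially mechanical once the normal form and $\mathcal{L}_K X = 0$ are in place.
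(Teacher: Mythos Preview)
Your direct route to the local normal form---writing $g = B^{-1}\mathrm{d}x^2 + B\,\mathrm{d}\phi^2$ and $X^\flat = P(x)\,\mathrm{d}x + Q(x)\,\mathrm{d}\phi$ and reading off the ODE from the components of (\ref{QEE})---is a legitimate alternative to the paper's K\"ahler-potential reduction (Proposition~\ref{prop1}, Theorem~\ref{theorem_normal_K}) and should lead to the same linear equation (\ref{b_eq}) for $B$. Your regularity outline for the second and third bullets is also broadly correct, though be aware that the proof that $b=0$ is forced (Lemma~\ref{parity_lemma}) is a delicate multi-case analysis exploiting the first-order identity (\ref{Bode}), not a one-line consequence of the single-period condition; and $F$ has a positive zero only when $m>0$ (for $m<0$ it is monotone increasing from $F(0)=1$, Lemma~\ref{lemmahc2} and its reversal), so your blanket assertion about $x_0$ needs qualification.

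There is, however, a genuine gap at the step $\mathcal{L}_K X = 0$. Neither of your proposed mechanisms works. Averaging fails because (\ref{QEE}) is \emph{quadratic} in $X$ through the $\tfrac{1}{m}X^\flat\otimes X^\flat$ term: while each $U(1)$-translate $\phi_t^*X$ solves (\ref{QEE}) with the same $g$, the average $\bar X$ generally does not, since $\overline{\phi_t^*X^\flat\otimes\phi_t^*X^\flat}\neq\bar X^\flat\otimes\bar X^\flat$. Nor is there any a priori uniqueness of $X$ for given $g$ to invoke. The paper establishes inheritance by a substantial argument (Proposition~\ref{prop_inh}): it prolongs (\ref{QEE}) to a closed first-order system (\ref{pqe1})--(\ref{pqe2}) in $(X,\Omega)$ with $\mathrm{d}X^\flat=\Omega\,\mathrm{vol}_M$, differentiates to obtain the algebraic constraints (\ref{step1})--(\ref{step2}), and in coordinates adapted to $K$ eliminates variables down to polynomial relations in the components of $X$ whose coefficients depend only on the transverse variable; a resultant computation (with separate treatment of $m=-4$ and $(m,\lambda)=(-1,0)$), together with real analyticity of solutions (Lemma~\ref{RAlem}), forces $K$-invariance of $X$ unless $R$ is constant. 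The constant-curvature alternative is then excluded by the hypothesis $\mathrm{d}X^\flat\not\equiv 0$ via Remark~\ref{rem_const_R}. Your proposal needs an argument of comparable strength before the coordinate reduction can even begin.
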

Therefore $b=0$ together with the restriction on $c$ given in the second part of the Theorem are 
necessary and sufficient conditions for
the structure (\ref{B_met}) to extend to $S^2$. 
In the special case $m=2$, where the formula (\ref{hg_function}) reduces to $F(x)=1-x^2$,
a much stronger result has been established \cite{DL23}: the existence of a $U(1)$ isometry
preserving $X$ follows from the global assumption that $M=S^2$. This rigidity result has recently been extended to $m>2$ and $m\leq 0$ in \cite{alex}. In view of these results the additional assumption of
the existence of an $U(1)$ isometric action on $S^2$ is only needed if $m \in (0, 2)$.

To arrive at the local normal form (\ref{B_met}),
(\ref{hyper_geom}) we shall 
use a reformulation (Proposition \ref{prop1} in \S\ref{sectionkahler}) of (\ref{QEE}) as a fourth order non-linear PDE for the K\"ahler potential for the metric $g$. 
If $g$ admits a Killing vector, then either $[K, X]=0$, or $g$ is a metric of constant curvature (Proposition
\ref{prop_inh} in \S\ref{sectionlocal}). In this axi-symmetric case the PDE for the K\"ahler potential 
reduces to an ODE solvable in terms of the hypergeometric functions (Theorem \ref{theorem_normal_K} in 
\S\ref{sectionlocal}). To ensure that the solutions  (\ref{B_met}), (\ref{hyper_geom}) extend to smooth metrics on $S^2$, the function $B$ in (\ref{hyper_geom})
and its derivatives must satisfy certain regularity conditions at the end points (Lemma \ref{lem:gsmooth} in \S\ref{sectionregular}). Imposing these
conditions under the additional assumption that $B$ is an even function leads to the proof of the second part of Theorem \ref{main_theo_md}. The last part of the Theorem asserts that the even parity of $B$ is necessary as well as sufficient (Lemma \ref{parity_lemma} in \S\ref{sectionregular}).
In particular we deduce that for all combinations of $(m, \lambda)$ allowed by the Gauss-Bonnet theorem (Proposition \ref{prop2})  there exists an open set $U\subset \R$ such that the quasi-Einstein structure corresponding to (\ref{hyper_geom}) extends to $S^2$ iff $c\in U$.

While the paper focuses on the genus zero case, some non-existence results can be established for other topologies.  This has been done in \cite{Lew_genus} if $m=2$, and more recently \cite{alex} for general $m$. In \S\ref{section_proj} we shall use an old result of Milnor to establish the following.
\begin{theorem}
\label{mminus1}
Let $(g, X)$ satisfy the quasi-Einstein equation (\ref{QEE}) with $\lambda=0$ and $m=-1$ on a compact
orientable surface $M$. Then $X=0$ and $(M, g)$ is the flat torus.  
\end{theorem}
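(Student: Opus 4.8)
The plan is to combine the projective reformulation recalled in the introduction with an old topological obstruction of Milnor. In brief: the hypothesis forces the tangent bundle of $M$ to carry a flat linear connection; by Milnor's work this is possible only when $M$ is the two--torus; and on the torus a Gauss--Bonnet identity then forces $X\equiv 0$, so that $g$ is flat.

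For the first and main step, recall that since $n=2$ we have $m=1-n=-1$, so by \cite{nur} the equation \eqref{QEE} with $\lambda=0$ is equivalent to the assertion that $\nabla^g$ is projectively equivalent to a torsion--free connection with skew--symmetric Ricci tensor; concretely one may take $\hat\nabla_Y W=\nabla^g_Y W+\beta(Y)W+\beta(W)Y$ with $\beta$ a constant multiple of $X^\flat$. I would first record an elementary fact: on a \emph{surface} a torsion--free connection whose Ricci tensor is purely antisymmetric has curvature endomorphism a multiple of the identity, $R^{\hat\nabla}(Y,Z)=\rho(Y,Z)\,\mathrm{Id}$ for a $2$--form $\rho$ (in dimension two the curvature tensor is linearly determined by the Ricci tensor, and a one--line index computation gives the claim when the latter is antisymmetric). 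Next I would use orientability of $M$: then $\Lambda^2TM$ is a trivial real line bundle, so a nowhere--vanishing section $\sigma$ furnishes a \emph{global} $1$--form $\theta$ on $M$ with $\hat\nabla\sigma=\theta\otimes\sigma$, and tracing $R^{\hat\nabla}$ gives $d\theta=2\rho$. Consequently the globally defined connection $\tilde\nabla:=\hat\nabla-\tfrac12\,\theta\otimes\mathrm{Id}$ on $TM$ has curvature $R^{\hat\nabla}(Y,Z)-\tfrac12(d\theta)(Y,Z)\,\mathrm{Id}=0$, and moreover $\tilde\nabla\sigma=0$; thus $TM$ carries a flat connection with holonomy in $\mathrm{SL}(2,\mathbb R)\subset\mathrm{GL}^+(2,\mathbb R)$.

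I would then invoke Milnor's theorem on the nonexistence of flat connections: if the tangent bundle of a closed orientable surface $\Sigma_g$ admits a flat $\mathrm{GL}^+(2,\mathbb R)$--connection then $|\chi(\Sigma_g)|=|e(T\Sigma_g)|\le g-1$, which together with $|\chi(\Sigma_g)|=|2-2g|$ forces $g=1$; hence $M$ is diffeomorphic to $T^2$. On the torus, take the $g$--trace of \eqref{QEE} with $m=-1$, $\lambda=0$: using $\mathrm{Ric}(g)=K_g\,g$ in dimension two this reads $\operatorname{div}_g X=-|X|_g^2-2K_g$, and integrating over $T^2$ together with $\int_{T^2}K_g\,dA_g=2\pi\chi(T^2)=0$ gives $\int_{T^2}|X|_g^2\,dA_g=0$. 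Therefore $X\equiv 0$, so $\mathrm{Ric}(g)=0$; in dimension two this forces $g$ to be flat, and a closed orientable flat surface is a flat torus by the Bieberbach (Killing--Hopf) theorem, which completes the proof.

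The substantive part is the construction in the second paragraph: in two dimensions the skew--symmetry of $\mathrm{Ric}(\hat\nabla)$ confines the entire curvature of $\hat\nabla$ to its determinant part, and orientability of $M$ lets one strip that part away globally, leaving an honest flat connection on $TM$ itself. After this Milnor's inequality eliminates every genus but $1$, and the torus case collapses under the elementary integral identity. (The exact constant relating $\beta$ to $X^\flat$ is immaterial, since only the skew--symmetry of $\mathrm{Ric}(\hat\nabla)$ enters; the relation $\tilde\nabla\sigma=0$ is noted only to guarantee that the flat connection is a $\mathrm{GL}^+(2,\mathbb R)$--connection, as Milnor's bound requires.)
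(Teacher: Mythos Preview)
Your proof is correct and follows the same strategy as the paper: build a flat connection on $TM$ from the quasi-Einstein data, apply Milnor's theorem to force $M\cong T^2$, and then use the integrated trace of \eqref{QEE} (Gauss--Bonnet, as in Proposition~\ref{prop2}) to conclude $X=0$ and $g$ flat. The paper writes the flat connection down directly as $D=\nabla+\tfrac12 X^\flat\otimes\mathrm{Id}-\mathrm{Id}\otimes X^\flat$ and checks that its Ricci tensor (hence, in dimension two, its full curvature) vanishes, whereas your two-step construction---first pass to the torsion-free skew-Ricci representative $\hat\nabla$, then strip its central curvature via the trivialisation of $\Lambda^2TM$---produces the very same connection (with $\Upsilon=-X^\flat$ one finds $\theta=-3X^\flat$, so $\tilde\nabla=D$), giving a more geometric explanation of why it exists.
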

In the Appendix we shall consider the general dimension $n$ and show that, if $m=2$ and $M$ is 
compact, then the Killing vector of $g$ proven to exist in \cite{DL23} must preserve $X$. This strengthens the results of \cite{DL23}, where this {\em inheritance} property has been established
only for $n=2$, or for general $n$ but $\lambda\leq 0$. In the rest of the paper with the exception of Lemma \ref{RAlem}, some considerations in  \S\ref{section_proj}, and Theorem \ref{theoapp}  in the Appendix, we shall assume that $n=\dim{M}=2$.

\subsection*{Acknowledgements} MD is grateful to Jan Derezi\'nski for helpful discussions. JL acknowledges support by the Leverhulme Research Project Grant RPG-2019-355.   HK acknowledges the support of the NSERC Grant RGPIN-2018-04887. AC acknowledges the support of the Cambridge International Scholarship.
The work of AC and MD was partially supported by the Simons Foundation grant (award no. SFI-MPS-T-Institutes-00010825) and from State Treasury funds as part of a task commissioned by the Minister of Science and Higher Education under the project {\em Organization of the Simons Semesters at the Banach Center - New Energies in 2026-2028} (agreement no. MNiSW/2025/DAP/491).

\section{General properties of the quasi-Einstein equations on surfaces}
In this section we shall establish some  properties of (\ref{QEE}) needed in the rest of the paper. 

\subsection{Topology}
If $M$ is compact
and without boundary, then its topology is constrained by the sign of $m$ and $\lambda$.
\begin{prop}
\label{prop2}
Let  ${\tt g}_M$ be the genus of a two-dimensional closed orientable surface  $M$, and let $(M, g, X)$ satisfy (\ref{QEE}). 
\begin{itemize}
\item If $m>0$ and $\lambda>0$, then ${\tt g}_M=0$.
\item If $m>0$ and $\lambda=0$, then ${\tt g}_M\leq 1$ with an equality iff $(M, g)$ is the flat torus.
\item If $m<0$ and $\lambda<0$, then ${\tt g}_M>1$.
\item If $m<0$ and $\lambda=0$, then ${\tt g}_M\geq 1$ with an equality iff $(M, g)$ is the flat torus.
\end{itemize}
\end{prop}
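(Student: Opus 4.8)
The plan is to integrate the trace of the quasi-Einstein equation over the closed surface $M$ and combine it with the Gauss--Bonnet theorem. Taking the metric trace of (\ref{QEE}) in dimension two gives $R = \frac{1}{m}|X|^2 - \mbox{div}(X) + 2\lambda$, where $R$ is the scalar curvature. Integrating over $M$, the divergence term drops out by Stokes' theorem (recall $M$ is closed), so $\int_M R\, \mathrm{vol}_g = \frac{1}{m}\int_M |X|^2\, \mathrm{vol}_g + 2\lambda\, \mathrm{Area}(M)$. On the other hand, Gauss--Bonnet says $\int_M R\, \mathrm{vol}_g = 4\pi\chi(M) = 4\pi(2 - 2{\tt g}_M)$, which is positive for ${\tt g}_M = 0$, zero for ${\tt g}_M = 1$, and negative for ${\tt g}_M > 1$.

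From here each case is a sign count. When $m > 0$ the term $\frac{1}{m}\int_M |X|^2$ is non-negative, so $4\pi(2-2{\tt g}_M) \geq 2\lambda\,\mathrm{Area}(M)$. If in addition $\lambda > 0$ the right-hand side is strictly positive, forcing ${\tt g}_M = 0$. If $\lambda = 0$ we get ${\tt g}_M \leq 1$, and equality ${\tt g}_M = 1$ forces both $\int_M |X|^2 = 0$ and $\chi(M)=0$; the former gives $X \equiv 0$, whereupon (\ref{QEE}) reduces to $\mbox{Ric}(g) = 0$, i.e. $g$ is flat, and a flat metric on the torus is the flat torus. When $m < 0$ the term $\frac{1}{m}\int_M |X|^2$ is non-positive, so $4\pi(2-2{\tt g}_M) \leq 2\lambda\,\mathrm{Area}(M)$; with $\lambda < 0$ the right-hand side is strictly negative, forcing $2 - 2{\tt g}_M < 0$, i.e. ${\tt g}_M > 1$, and with $\lambda = 0$ we get ${\tt g}_M \geq 1$ with equality again forcing $X \equiv 0$ and hence flatness, giving the flat torus.

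I do not anticipate a serious obstacle here: the argument is essentially a one-line integral identity plus Gauss--Bonnet plus sign bookkeeping. The only point requiring a little care is the equality cases in the $\lambda = 0$ statements, where one must argue that $X\equiv 0$ really does follow (the integrand $|X|^2$ is pointwise non-negative, so a vanishing integral forces it to vanish everywhere) and then that a flat metric on a genus-one surface is isometric to a flat torus (standard: the universal cover is $\mathbb{R}^2$ with the flat metric, and the deck group acts by isometries, i.e. a lattice of translations, up to the fact that orientability rules out the Klein bottle). The hypothesis that $M$ is orientable is used precisely to exclude the Klein bottle from the $\chi = 0$ case and, implicitly, to have $4\pi\chi(M) = 4\pi(2-2{\tt g}_M)$ with ${\tt g}_M$ well-defined.
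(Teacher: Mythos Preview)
Your proof is correct and follows essentially the same approach as the paper: trace the quasi-Einstein equation, integrate over $M$ to drop the divergence, apply Gauss--Bonnet, and read off the sign constraints, handling the equality cases by noting $|X|^2=0$ forces $X=0$ and hence flatness. The paper's version is just the terser form of exactly this argument.
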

\begin{proof} 
Taking the trace of (\ref{QEE}), and integrating the resulting equation over $M$, dropping the divergence term, and using the Gauss-Bonnet formula 
gives
\be
\label{gb_constraint}
\int_M\Big(\frac{|X|^2}{m}+2\lambda\Big)\mbox{vol}_M=8\pi(1-{\tt g}_M)  \; ,
\ee where $\mbox{vol}_M$ is the  volume form of $(M,g)$.
The statement of the proposition now follows immediately. In the genus one case with $\lambda=0$ we get $|X|^2=0$, which in the Riemannian signature
implies that $X=0$. Substituting this back into (\ref{QEE}) gives that $g$ is flat.
\end{proof}
\subsection{Prolongation}
The quasi-Einstein equations (\ref{QEE}) do not form a closed system, as only the symmetrised part of
the covariant derivative of $X^\flat$
 is specified. It is however a system of finite type which means
that adding additional unknowns, and cross-differentiating eventually determines all derivatives (see e.g.
Appendix C in \cite{D}). This process known as the prolongation has been carried over for (\ref{QEE})
in \cite{nur}, and it turns out that only one differentiation is needed. 

In the proof of Proposition \ref{prop_inh} we will need to go a step further, and explore
some of the constraints coming from the prolongation procedure applied to (\ref{QEE}) on a surface. We close the system by introducing a function $\Omega$ by d$X^\flat = \Omega\mbox{vol}_M$ to  obtain
\begin{equation}\label{pqe1}
\nabla X^\flat = \frac 1m X^\flat \otimes X^\flat + (\lambda - \tfrac 12 R)g + \frac 12\Omega\mbox{vol}_M. 
\end{equation}
Applying $\nabla$ to (\ref{pqe1}) and commuting the derivatives of $X$ leads to
\begin{equation} \label{pqe2}
\text{d} \Omega = \frac 3m \Omega X^\flat + \star\text{d} R + \frac{1}{m}\left(2\lambda - \left(m + 1\right)R\right)\star\!X^\flat \; ,
\end{equation}
where $R$ is the scalar curvature and $\star$ the Hodge star operator of $g$.
The system is now closed after one prolongation.
The first constraint comes from setting the skew part of $\nabla^2 \Omega$ to zero. It gives
\begin{equation} \label{step1}
0 = -\Delta R + \left(1 + \frac 4m\right)\langle X,\text{d} R\rangle + \frac{3}{m}\Omega^2 + \frac{1}{m^2}(2\lambda - (m+1)R)(2\vert X \vert^2 -2\lambda m + mR) \; ,
\end{equation}
where $\Delta$ is the Laplace-Beltrami operator.
Differentiating this and removing all derivatives of $X$ and $\Omega$ using (\ref{pqe1}) and (\ref{pqe2}) yields two more constraints from vanishing of two components of the following one--form
\begin{align}
    0 = &-\text{d}(\Delta R) + \frac 1m\left(1 + \frac 4m\right)\langle X, \text{d} R\rangle X^\flat + \left(1 + \frac 4m\right)\text{Hess}(R)(X) + \left(\frac 12 + \frac{8}{m}\right)\Omega\star\!\text{d} R \nonumber \\
 + &\left[- \frac 2m\left(1 + \frac 1m\right)\vert X \vert^2 - \left(\frac 52 + \frac 4m\right)R + \lambda\left(3 + \frac 8m\right)\right] \text{d} R + \frac{18}{m^2}\Omega^2X^\flat \nonumber \\
    &+ \frac{4}{m^2} \left(2\lambda - (m+1)R\right)\left(\left(\lambda - \tfrac 12R\right)X^\flat + 2\Omega\star\! X^\flat + \frac 1m\vert X \vert^2X^\flat\right). \label{step2}
\end{align}
\begin{remark}
\label{rem_const_R} In the constant curvature case (with $X$ non-identically zero) equations (\ref{pqe2}) and (\ref{step2}) imply $\Omega = 0$ and $2\lambda = (m+1)R$ if $m\neq -1$ (if $m=-1$, then $R$ can be arbitrary
but $\lambda$ must vanish).
\end{remark}

\subsection{Analyticity}
We will need to refer to the real analyticity of (\ref{QEE}) in our proof
of Proposition \ref{prop_inh} when $n=2$. The proof of the following result is however valid for any $n$. It follows the considerations by Pedersen and Tod for the Einstein-Weyl equation \cite{PT}.
\begin{lemma}
\label{RAlem}
Let $(M, g, X)$ be a quasi-Einstein manifold of dimension $n$. In harmonic coordinates the components of $X$ and $g$ are real analytic.    
\end{lemma}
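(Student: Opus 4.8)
The plan is to recast the quasi-Einstein system, in harmonic coordinates, as a determined \emph{elliptic} system for the pair $(g,X)$ with real-analytic coefficients, and then to apply Morrey's theorem that solutions of analytic elliptic systems are real-analytic, following the Einstein--Weyl treatment of \cite{PT}.

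First I would fix harmonic coordinates $x^i$ about a point --- coordinate functions that are $g$-harmonic, $\Delta_g x^i=0$; these exist by solving an elliptic equation and are at least as regular as $g$ --- and invoke the classical identity that in such coordinates $\mathrm{Ric}_{ij}=-\tfrac12 g^{kl}\partial_k\partial_l g_{ij}+Q_{ij}(g,\partial g)$, where $Q_{ij}$ is polynomial in $\partial g$ with coefficients rational in the $g_{ij}$, hence real-analytic where $g$ is nondegenerate. Substituting this into (\ref{QEE}) and writing $(\mathcal L_X g)_{ij}=2\partial_{(i}X_{j)}-2\Gamma^k_{ij}X_k$ turns (\ref{QEE}) into $\binom{n+1}{2}$ scalar equations whose principal part is $-\tfrac12 g^{kl}\partial_k\partial_l$ acting componentwise on $g_{ij}$, all remaining terms being of order at most one in $g$ and $X$ and analytic in $(g,\partial g,X)$. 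This is under-determined, so I would adjoin $n$ further equations for $X$, obtained either from the prolongation of \cite{nur} or --- more directly --- by taking the divergence of (\ref{QEE}): applying the contracted Bianchi identity, then eliminating $\nabla R$ by differentiating the trace of (\ref{QEE}) (and, if one wishes to keep the coefficients free of $\mathrm{Ric}$, eliminating $\mathrm{Ric}$ again via (\ref{QEE})), one is led to an equation of the schematic form
\[
\tfrac12\, g^{kl}\nabla_k\nabla_l X_i \;=\; \tfrac1m(\operatorname{div}X)X_i - \tfrac1m X^j(\mathrm d X^\flat)_{ij} - \tfrac1{2m}|X|^2 X_i + \tfrac12\nabla_{(i}X_{j)}X^j - \tfrac\lambda2 X_i ,
\]
which is an algebraic consequence of (\ref{QEE}), is second order in $X$ with principal part $\tfrac12 g^{kl}\partial_k\partial_l X_i$, and whose lower-order terms are again analytic in $(g,\partial g,X)$.

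The enlarged system has $\binom{n+1}{2}+n$ equations for the $\binom{n+1}{2}+n$ unknowns $(g_{ij},X_i)$. Giving Douglis--Nirenberg weight $2$ to every unknown and $0$ to every equation, the $g$-equations involve no top-order derivatives of $X$, so the principal symbol at a covector $\xi\ne0$ is block lower triangular,
\[
\sigma_\xi \;=\; \begin{pmatrix} \tfrac12|\xi|_g^2\, I & 0 \\ \ast & -\tfrac12|\xi|_g^2\, I \end{pmatrix},
\]
with invertible diagonal blocks (here $|\xi|_g^2=g^{kl}\xi_k\xi_l>0$ since $g$ is Riemannian); hence $\det\sigma_\xi\ne0$ and the system is elliptic, with real-analytic coefficients. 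As a quasi-Einstein structure is $C^\infty$ by hypothesis (and if one only assumed finite regularity, elliptic bootstrapping would first upgrade it to $C^\infty$), Morrey's analyticity theorem applies and shows that $g_{ij}$ and $X_i$ are real-analytic functions of the harmonic coordinates $x^i$; running the argument in every harmonic chart shows in addition that $(M,g)$ carries a real-analytic structure with respect to which $X$ is analytic.

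The step I would treat with care is precisely the passage from the under-determined equation (\ref{QEE}) to a determined elliptic system: one must check that the adjoined equations genuinely follow from (\ref{QEE}), so that every quasi-Einstein solution solves the enlarged system, and that they do not spoil ellipticity. The latter is why it is essential to remove $\nabla R$ --- which is third order in $g$ --- using the trace of (\ref{QEE}); after this, $X$ enters the new equations only through its own second derivatives together with lower-order analytic terms, which is what keeps the principal symbol triangular with invertible diagonal blocks. The remaining ingredients --- existence and regularity of harmonic coordinates, the normal form of $\mathrm{Ric}$ in them, and the quotation of Morrey's theorem --- are standard and parallel \cite{PT}.
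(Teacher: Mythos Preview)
Your proposal is correct and follows essentially the same route as the paper: both derive an auxiliary second-order equation for $X$ from the contracted Bianchi identity applied to (\ref{QEE}), couple it to the harmonic-coordinate form of $\mathrm{Ric}$, and invoke elliptic analyticity (the paper cites DeTurck--Kazdan rather than Morrey, and it eliminates the second-order-in-$g$ terms in $\Delta X^\flat$ by substituting (\ref{QEE}) back in rather than relegating them to the off-diagonal block of a lower-triangular Douglis--Nirenberg symbol, but these are cosmetic variants of the same argument). One small slip: your lower-order terms in the $X$-equation are analytic in $(g,\partial g,X,\partial X)$, not just $(g,\partial g,X)$, but this does not affect the principal symbol or the conclusion.
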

\begin{proof}
The contracted Bianchi identity applied to the quasi-Einstein equations (\ref{QEE}) implies (see Lemma 2.1 in \cite{BGKW})
\begin{equation} \label{qe2}
\Delta X^\flat = -\lambda X^\flat + \left(\frac 2m + \frac 12\right)\nabla_X X^\flat - \left(\frac 1m - \frac 14\right)\nabla(\vert X\vert^2) - \frac 1m\vert X \vert^2X^\flat + \frac 2m(\text{div }X)X^\flat.
\end{equation}
The only terms in (\ref{QEE}) and (\ref{qe2}) that contain second derivatives are Ric$(g)$ and $\Delta X^\flat$. In a harmonic coordinate chart $(U, (x^i))$ on $M$ we can write
\begin{equation}
    \text{Ric}(g)_{ij} = -\frac{1}{2} g^{kl}\frac{\partial^2 g_{ij}}{\partial x^k\partial x^l} + \dots \hspace{.4cm} \text{and}\hspace{.4cm} \Delta X_i^\flat = g^{kl}\frac{\partial^2 X_i^\flat}{\partial x^k\partial x^l} + \dots,
\end{equation}
where the dots denote terms involving at most first derivatives of components of $g$ and $X^\flat$. Note that the coordinate expression for the Laplace-Beltrami operator acting on one-forms does contain second derivatives of the metric, but these can be eliminated using the harmonic coordinate condition $g^ {ij}\Gamma^k_{ij} = 0$ and the expression for $g^{kl}\partial_k\partial_l g_{ij}$ coming from (\ref{QEE}).  Since the metric is positive definite, it follows that in harmonic coordinates (\ref{QEE}) and (\ref{qe2}) constitute an elliptic system of $\frac 12n(n+3)$ differential equations in $\frac 12n(n+3)$ unknowns $g_{ij}$ and $X_k^\flat$. Following the arguments of Deturck and Kazdan \cite{DK} we deduce that the components of $g$ and $X$ are real analytic in these coordinates.
\end{proof}

\subsection{K\"ahler potential}
\label{sectionkahler}
In this section we shall reformulate (\ref{QEE}) as a scalar PDE for a K\"ahler potential. 
In \cite{DL23} this formulation was used in the particular case when $m=2$. We refer the reader
to \cite{DL23} for the general discussion of K\"ahler potentials and their global properties on $S^2$.
\begin{prop}
\label{prop1}
Locally there exist complex coordinates $(\z, \zb)$ and a K\"ahler potential $f$ on $M$ such that
\be
\label{gandV}
g=4f_{\zeta\bar{\zeta}} \; \td\zeta \td\bar{\zeta}, \quad 
\xb=-{mf_{\zeta\bar{\zeta}}}
(\td\zeta/f_{\bar{\zeta}}+\td\bar{\zeta}/f_{{\zeta}}),
\ee
and the system (\ref{QEE}) reduces to a single 4th order PDE for the K\"ahler potential
$f$
\begin{eqnarray}
\label{pde}
&&\frac{2}{m}(f_{\z}f_{\zb})^2(f_{\z\z\zb\zb}f_{\z\zb}-f_{\z\z\zb}f_{\z\zb\zb})
+\frac{4\lambda}{m}(f_{\z\zb})^3(f_{\z})^2(f_{\zb})^2
\\
&&-(f_{\z\zb})^3 (f_{\zb\zb}(f_{\z})^2 +f_{\z\z}(f_{\zb})^2)+
(f_{\z\zb})^2(f_{\z}(f_{\zb})^2f_{\z\z\zb}+f_{\zb}(f_{\z})^2f_{\z\zb\zb})
+2 (f_{\z\zb})^4f_{\z}f_{\zb}=0.\nonumber
\end{eqnarray}
\end{prop}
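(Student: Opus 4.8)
\emph{Proof plan.} The strategy is to exploit that every metric on a surface is Kähler, to introduce a local Kähler potential, and then to observe that the reality of $X^\flat$ together with the trace-free part of (\ref{QEE}) pins $X$ down in terms of that potential, so that (\ref{QEE}) collapses to a single scalar equation.

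First I would pick a local isothermal coordinate $\zeta$, so that $g=4e^{u}\,\mathrm{d}\zeta\,\mathrm{d}\bar\zeta$ for a real function $u$. The Kähler form of $g$ is just its area form, which is closed for dimensional reasons, so the local $\partial\bar\partial$-lemma produces a real function $f$ with $f_{\zeta\bar\zeta}=e^{u}$; this is the first identity in (\ref{gandV}). The only residual freedom is $f\mapsto f+h(\zeta)+\overline{h(\zeta)}$ with $h$ holomorphic (optionally combined with a holomorphic reparametrisation of $\zeta$), and it leaves $g$ unchanged.

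Next I would write (\ref{QEE}) in the coordinate $\zeta$. Since $\mathrm{Ric}(g)=\tfrac12R\,g$ in two dimensions, (\ref{QEE}) is equivalent to its trace
\[
R=\tfrac1m|X|^{2}-\mathrm{div}\,X+2\lambda
\]
together with its trace-free part, whose only independent component is the $(\zeta\zeta)$-equation (and its complex conjugate)
\[
\tfrac1m X_\zeta^{2}=\nabla_\zeta X_\zeta=\partial_\zeta X_\zeta-\big(\partial_\zeta\log f_{\zeta\bar\zeta}\big)X_\zeta .
\]
This is a Riccati equation for $X_\zeta$ in $\zeta$; the substitution $X_\zeta=-m\,\partial_\zeta\log w$ linearises it to $\partial_\zeta\big(w_\zeta/f_{\zeta\bar\zeta}\big)=0$, and integrating (using $\partial_\zeta f_{\bar\zeta}=f_{\zeta\bar\zeta}$) yields $w=Af_{\bar\zeta}+B$ with $A,B$ antiholomorphic (both satisfy $\partial_\zeta(\,\cdot\,)=0$). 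Hence $X_\zeta=-mf_{\zeta\bar\zeta}/(f_{\bar\zeta}+B/A)$, and since $B/A$ is antiholomorphic it is absorbed by a potential shift $f\mapsto f+h+\overline{h}$ with $h'=\overline{B/A}$; reality of $X^\flat$ adds no further constraint, the $(\bar\zeta\bar\zeta)$-equation being the conjugate of the above. This puts $X^\flat$ into the form stated in (\ref{gandV}). (If $A\equiv0$ then $X_\zeta\equiv0$, hence $X\equiv0$ and $g$ has constant curvature — the trivial case, for which the claim holds with any Kähler potential.)

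Finally, with $g$ and $X^\flat$ now explicit functions of $f$, the trace-free part of (\ref{QEE}) is satisfied identically, and substituting $R=-(f_{\zeta\zeta\bar\zeta\bar\zeta}f_{\zeta\bar\zeta}-f_{\zeta\zeta\bar\zeta}f_{\zeta\bar\zeta\bar\zeta})f_{\zeta\bar\zeta}^{-3}$, $|X|^{2}=m^{2}f_{\zeta\bar\zeta}/(f_\zeta f_{\bar\zeta})$ and $\mathrm{div}\,X=\tfrac1{2f_{\zeta\bar\zeta}}\big(\partial_\zeta X_{\bar\zeta}+\partial_{\bar\zeta}X_\zeta\big)$ into the trace equation, then multiplying through by $-\tfrac2m(f_\zeta f_{\bar\zeta})^{2}f_{\zeta\bar\zeta}^{3}$ and collecting terms, reproduces exactly (\ref{pde}); thus, given the forms (\ref{gandV}), the system (\ref{QEE}) is equivalent to the single equation (\ref{pde}). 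This last computation is routine but lengthy; the genuinely delicate point is the middle step — identifying the substitution that linearises the Riccati equation, and verifying that the residual gauge freedom in the Kähler potential is exactly large enough to remove the integration constant while respecting the reality of $X^\flat$.
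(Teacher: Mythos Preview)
Your proof is correct and follows essentially the same route as the paper: solve the $(\zeta\zeta)$ component of (\ref{QEE}) as a Riccati equation for $X_\zeta$, absorb the antiholomorphic integration constant into the Kähler-potential gauge freedom $f\mapsto f+h+\bar h$, and then read off the scalar PDE from the remaining $(\zeta\bar\zeta)$ (equivalently, trace) component. Your presentation is slightly more explicit about the linearising substitution $X_\zeta=-m\,\partial_\zeta\log w$ and about the trace/trace-free split, but the logic and the key steps coincide with the paper's proof.
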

\begin{proof} 
Choose holomorphic coordinates and an associated K\"ahler potential so $g$ is of the form (\ref{gandV}). In these coordinates write
\[
\xb=A\td\zeta+\overline{A}\td\zb,
\]
where $A$ is a smooth complex valued function.
The $(\zeta\zeta)$ and  $(\zb\zb)$
components of (\ref{QEE}) yield
\[
\p_\zeta A-\frac{1}{m}A^2-\frac{f_{\zeta\zeta\zb}}{f_{\zeta\zb}}A=0,
\]
and the complex conjugate of this equation. These two equations can be integrated to find $A$ in terms of $f$ and its derivatives to be
\be
\label{A0}
A=-m\frac{f_{\z\zb}}{\p_{\zb}f+\p_{\zb}\overline{C}},
\ee
where $C=C(\z)$ is an arbitrary holomorphic function. The freedom of adding a holomorphic function and its conjugate to $f$ can be used to set $C$ to zero.
This choice yields
the form of $\xb$ in (\ref{gandV}). 
Finally, using  the formula for the Ricci scalar
\begin{equation}
R=-(\Delta_0 f)^{-1} \Delta_0(\ln{(\Delta_0 f)}), \quad\mbox{where}\quad\Delta_0=4\frac{\p^2}{\p\zeta\p\bar{\zeta}}  \label{eq:flatlap}
\end{equation} 
is the flat Laplacian,
the  $(\zeta\zb)$
component of (\ref{QEE}) gives the 4th order PDE (\ref{pde}) for $f$.
\end{proof}
If the gradient and Laplacian
are taken with respect to the Cartesian coordinates 
$(\frac{1}{2}(\z+\zb), \frac{1}{2i}(\z-\zb))$ then,
expanding the derivatives and isolating the numerator in (\ref{pde}), we find that 
this equation is equivalent to a vector PDE on $\R^2$ given by
\be
\label{vector_pde}
\frac{1}{m}{\Delta_0}^2 f-\frac{1}{m}\frac{|\nabla(\Delta_0 f)|^2}{\Delta_0 f}+
2\frac{(\Delta_0 f)^3}{|\nabla f|^2}+\Delta_0 f\nabla f\cdot
\nabla\Big( \frac{\Delta_0 f}{|\nabla f|^2} \Big)+\frac{2}{m}\lambda(\Delta_0 f)^2=0.
\ee
Here $\nabla$ and 
$\Delta_0=\nabla\cdot\nabla$ 
are the nabla operator and the Laplacian of the flat Euclidean metric.

\section{Symmetry reduction of the quasi-Einstein equation}
\label{sectionlocal}
In this section we shall show that if $g$ is assumed to be axisymmetric 
then the 4th order ODE resulting from the PDE (\ref{pde}) is solvable by quadratures. 

\subsection{Inheritance of symmetry}
We shall first demonstrate that if  metric admits a Killing vector field, the constraints (\ref{step1}) and (\ref{step2}) resulting from the prolongation procedure can be used to deduce that $X$ inherits the symmetry from $g$. 
\begin{prop}
\label{prop_inh}
Let $(g, X)$ be a solution to the quasi-Einstein equations (\ref{QEE}) on a two-dimensional connected surface $M$ admitting a Killing vector field $K$. Then either $[K, X] = 0$ or $g$ has constant curvature.
\end{prop}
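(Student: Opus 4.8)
The plan is to differentiate the prolonged system along the flow of the Killing field $K$ and to show that, unless $g$ has constant curvature, the linearised constraints force $Y:=[K,X]=\mathcal L_K X$ to vanish. Set also $w:=K(\Omega)$. Since $K$ is Killing it preserves the Levi--Civita connection, so $\mathcal L_K$ commutes with $\nabla$, with the Hodge star $\star$ and with the Laplacian, and it annihilates $g$, $\mathrm{vol}_M$ and, crucially, the scalar curvature: $K(R)=0$. It suffices to prove that $[K,X]$ vanishes on a non-empty open subset of $M$, since then it vanishes identically by the real analyticity of $X$ in harmonic coordinates (Lemma \ref{RAlem}). By the same analyticity, if $\mathrm{d}R\not\equiv 0$ then the open set $\mathcal U=\{p\in M:\mathrm{d}R_p\neq0\}$ is dense; if instead $\mathrm{d}R\equiv0$ then $(M,g)$ has constant curvature and we are done, so from now on assume $\mathcal U$ is dense.

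Next I would apply $\mathcal L_K$ term by term to (\ref{pqe1}), (\ref{pqe2}) and to the constraints (\ref{step1}), (\ref{step2}). Because $K(R)=0$, each occurrence of $R$ or of a derivative of $R$ is either annihilated or survives only through its $X$-dependence, and the outcome is a \emph{closed homogeneous linear} system for the pair $(Y^\flat,w)$. Linearising (\ref{pqe1}) and (\ref{pqe2}) produces the differential identities
\be
\nabla Y^\flat=\tfrac1m\bigl(Y^\flat\otimes X^\flat+X^\flat\otimes Y^\flat\bigr)+\tfrac12\,w\,\mathrm{vol}_M,\qquad
\mathrm{d}w=\tfrac3m\bigl(wX^\flat+\Omega Y^\flat\bigr)+\tfrac1m\bigl(2\lambda-(m+1)R\bigr)\star\!Y^\flat,
\ee
whereas, since (\ref{step1}) and (\ref{step2}) are polynomial in $X$ and $\Omega$ with coefficients assembled from $g$ and the derivatives of $R$ alone, linearising (\ref{step1}) gives the \emph{algebraic} relation
\be
\bigl(1+\tfrac4m\bigr)\langle Y,\mathrm{d}R\rangle+\tfrac6m\,\Omega w+\tfrac4{m^2}\bigl(2\lambda-(m+1)R\bigr)\langle X,Y\rangle=0,
\ee
and linearising (\ref{step2}) gives a further relation, valued in one-forms, which is again algebraic in $(Y^\flat,w)$ (it is, equivalently, what one gets by differentiating the previous identity and eliminating $\nabla Y^\flat$ and $\mathrm{d}w$ with the two displayed identities).

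The third step is linear algebra: the last two relations amount to three scalar equations for the three unknowns $(Y^1,Y^2,w)$, with a coefficient matrix $\mathcal M$ whose entries are analytic in $g,X,\Omega,R$ and the first two covariant derivatives of $R$. If $\det\mathcal M\not\equiv0$, then by analyticity $\det\mathcal M\neq0$ on a non-empty open set, there $(Y^\flat,w)=0$, and we are done. If $\det\mathcal M\equiv0$, one works on $\mathcal U$, where $\{\mathrm{d}R,\star\mathrm{d}R\}$ is a coframe: decompose $X^\flat$, $Y^\flat$ and $\mathrm{Hess}(R)(X)$ in this coframe, differentiate the one-form constraint once more while substituting the two displayed identities, and show that the resulting further relations leave no room for a non-zero parallel $(Y^\flat,w)$ unless the identical degeneracy of $\mathcal M$ itself forces constant curvature, in which case we are in the second alternative of the proposition. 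Remark \ref{rem_const_R} pins down the constant-curvature locus and should be useful here.

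I expect this last point---disposing of the locus $\det\mathcal M=0$---to be the main obstacle; it is the only place where one genuinely uses $n=2$ and the detailed shape of (\ref{step2}), and one should separately check the special values of $m$ (such as $m=-1$, or $m=-4$) at which the coefficients in the linearised constraints degenerate. Everything else is bookkeeping: the commutation rules for $\mathcal L_K$ with a Killing field, the term-by-term Lie differentiation of (\ref{pqe1})--(\ref{step2}), and the analyticity supplied by Lemma \ref{RAlem}.
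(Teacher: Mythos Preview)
Your strategy---Lie-differentiating the prolonged system along $K$ to get a closed homogeneous linear system for $(Y^\flat,w)=(\mathcal L_K X^\flat,\,K(\Omega))$ and then arguing that its algebraic part has full rank---is a natural alternative to the paper's argument, and the linearised identities you wrote down are correct. But the proposal is incomplete precisely at the point you flag: showing that the $3\times3$ coefficient matrix $\mathcal M$ (or its prolonged version) is non-singular away from the constant-curvature locus. The entries of $\mathcal M$ depend not only on $g$, $R$ and $\nabla R$ but on the unknown solution $(X,\Omega)$ itself, so ``$\det\mathcal M\not\equiv0$'' is not a statement about the metric alone; one has to rule out that the actual solution lies on the degeneracy locus, which brings you right back to the nonlinear constraints. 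Your fallback---differentiating once more and hoping that the combined system forces either $Y=0$ or constant curvature---is plausible but is exactly the hard computation, and you have not indicated how it goes through (nor why it should terminate).

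The paper takes a different route that sidesteps this circularity. Working in coordinates $g=\td r^2+k(r)^2\td\theta^2$ with $K=\partial_\theta$, it treats the nonlinear constraints \eqref{step1}--\eqref{step2} directly as polynomial equations in $X_r,X_\theta,\Omega$ with coefficients depending only on $r$. It then eliminates: first solving \eqref{step1} for $X_\theta^2$, then removing $X_\theta$ to obtain two polynomial relations $P_1,P_2$ in $u=\Omega^2$ and $Y=(m+4)R'X_r$ whose coefficients are functions of $r$ alone. The resultant $\mathrm{res}_u(P_1,P_2)$ is a degree-six polynomial in $Y$ with purely $r$-dependent coefficients; an explicit computation of its top two coefficients $\gamma_6,\gamma_5$ shows they cannot vanish simultaneously, so $X_r$ is determined by $r$, and the remaining components follow by back-substitution. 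The advantage of this over your linearisation is that the object one must show non-vanishing involves only metric data, so the check is a concrete calculation rather than an implicit condition on $(X,\Omega)$; the price is the need to handle the exceptional values $m=-4$ and $(m,\lambda)=(-1,0)$ separately, which the paper does.
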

\begin{proof}
We start with the hardest case in which $R$ is not constant, $m \neq -4$ and $(m, \lambda) \neq (-1,0)$. By real analyticity (Lemma \ref{RAlem}) $R$ can not be constant on an open set. It is therefore sufficient to prove that $K$ and $X$ commute in a neighborhood of every point where $(m+1)R \neq 2\lambda$ and d$ R \neq 0$. Let us choose coordinates such that $g = \td r^2 + k(r)^2\td\theta^2$ with $k(r) > 0$ and $K = \partial_\theta$. Writing $X^\flat = X_r \td r + X_\theta \td\theta$, equations (\ref{step1}) and (\ref{step2}) are three polynomials in $X_r, X_\theta$ and $\Omega$ with coefficients depending on $r$ only. We use this to show that the components of $X^\flat$ do not depend on $\theta$, which is equivalent to $[K, X]=0$.

We start by solving (\ref{step1}) for $X_\theta^2$ to find
\begin{equation} \label{xt2}
X_\theta^2 = k^2\left(\frac{m^2\Delta R-m(m+4)X_rR'-3m\Omega^2}{2(2\lambda - (m+1)R)}+m\lambda - \frac 12mR-X_r^2\right),
\end{equation}
where the prime denotes a derivative with respect to $r$. The $r$- and $\theta$-components of (\ref{step2}) are
\begin{align}
0&= -(\Delta R)'+\tfrac{m+4}{m^2}X_r^2R' + \tfrac{m+4}{m}X_rR''+R'\left(-\tfrac{2(m+1)}{m^2}(X_\theta^2k^{-2}+X_r^2) - (\tfrac 52 + \tfrac 4m)R + \lambda(3 + \tfrac 8m)\right) \nonumber\\
&\hspace{.4cm}+ \tfrac{18}{m^2}\Omega^2X_r + \tfrac{4}{m^2}(2\lambda - (m+1)R)\left(X_r(\lambda - \tfrac 12R) +2\Omega k^{-1}X_\theta + \tfrac 1mX_r(X_r^2 + k^{-2}X_\theta^2)\right),\label{rcomp}\\
0&= R'\left(\tfrac{m+4}{m^2}X_\theta X_r + \tfrac{m+4}{m}X_\theta k'k^{-1} - \tfrac{m+16}{2m}\Omega k\right) + \tfrac{18}{m^2}\Omega^2X_\theta \nonumber \\
&\hspace{.4cm} +\tfrac{4}{m^2}(2\lambda - (m+1)R)\left(X_\theta(\lambda - \tfrac 12R)-2\Omega kX_r+\tfrac 1mX_\theta(X_r^2 + k^{-2}X_\theta^2)\right).\label{tcomp}
\end{align}
We use (\ref{xt2}) to eliminate the $X_\theta^2$ terms in (\ref{rcomp}) and (\ref{tcomp}). After doing this there is only one occurrence of $X_\theta$ left in (\ref{rcomp}) in a term proportional to $X_\theta \Omega$. We can solve this for $X_\theta \Omega$ and use it to eliminate $X_\theta$ completely from (\ref{tcomp}) times $\Omega$. The resulting equation is polynomial in $u = \Omega^2$ and $Y = (m+4)R'X_r$. It can be written in the form
\begin{equation}\label{p1}
P_1(u,Y) = \alpha_0(Y) + \alpha_1(Y)u + \alpha_2(Y)u^2= 0,
\end{equation}
where $\alpha_0, \alpha_1, \alpha_2$ is a polynomial of degree $3, 2, 1$ with leading term $\frac{1}{8}Y^3, -3Y^2, 18Y$ respectively. The lower order coefficients are complicated expressions involving up to 2 derivatives of $k$ and up to 3 derivatives of $R$.

We obtain a second polynomial constraint in $u$ and $Y$ by contracting (\ref{step2}) with $X$ and using (\ref{xt2}) and the expression for $X_\theta \Omega$ obtained from (\ref{rcomp}) to eliminate any occurrences of $X_\theta$. This leads to
\begin{equation}\label{p2}
P_2(u,Y) = \beta_0(Y) + \beta_1(Y)u - 6u^2 = 0,
\end{equation}
where $\beta_0,\beta_1$ is a polynomial of degree $2,1$ with leading term $\beta_1 = -\frac14Y + \dots$ and 
\begin{equation}
    \beta_0 = \left[\frac 16 + \frac{5m}{16(m+4)} + \frac{2\lambda - (m+1)R}{3(m+4)R'}\left(\frac{R''}{R'}-\frac{k'}{k}\right)\right]Y^2 + \dots.
\end{equation}
The lower order terms again contain $k, k', k''$ and up to 3 derivatives of $R$. Let us view $P_1$ and $P_2$ as polynomials in $u$. The resultant res$(P_1, P_2)$ is a polynomial in $Y$ of degree at most 6 with coefficients depending on $r$ only. If there is a solution such that $X_r$ (and hence $Y$) depends on $\theta$, this polynomial must be identically zero. To calculate the coefficient $\gamma_6$ of $Y^6$ we only need the leading coefficients of $\alpha_i$ and $\beta_i$. Setting $\gamma_6$ to zero gives
\begin{equation}
R''  = \frac{k'R'}{k}-\frac{5(m+1)(R')^2}{4(2\lambda - (m+1)R)}.
\end{equation}
For the coefficient $\gamma_5$ of $Y^5$ we need the first subleading terms in $\alpha_i$ and $\beta_i$. Requiring $\gamma_5$ to vanish imposes
\begin{equation}
\left(R''  - \frac{k'R'}{k}+\frac{5(m+1)(R')^2}{4(2\lambda - (m+1)R)}\right)\delta -\frac{25}{3}(m+4)^2(R')^3(2\lambda - (m+1)R) = 0,
\end{equation}
where
\begin{align}
    \delta = \:&m(23m + 68)(m+4)(R')^2k'k^{-1}-15m(m+4)^2R'R'' +32m(2\lambda - (m+1)R)(\Delta R)' \nonumber\\
    &-16(3m+4)(m+1)(m+4)(R')R^2+32(5m^2+34m+32)\lambda R(R')+48m(m+1)R'\Delta R \nonumber \\
    &-128(m+8)\lambda^2R'-\tfrac{75}{4}m(m+1)(m+4)^2(2\lambda - (m+1)R)^{-1}(R')^3.
\end{align}
Thus given our assumptions the coefficients of $Y^5$ and $Y^6$ can not vanish simultaneously. Hence res$(P_1,P_2)$ is not identically zero, and so we must have $\partial_\theta X_r = 0$. But now (\ref{p2}) is an equation for $u$ with coefficients only depending on $r$, implying $\partial_\theta u = 0$. From (\ref{xt2}) we then also find $\partial_\theta X_\theta = 0$, and therefore $[K, X]=0$.

It remains to consider the cases $(m,\lambda) = (-1, 0)$ and $m = -4$. If $(m, \lambda) = (-1, 0)$ we can use (\ref{step1}) to eliminate the $\Omega^2$ term in (\ref{rcomp}). The resulting equation is
\begin{equation} \label{m=-1}
0 = -(\Delta R)' + \frac 32RR'  - 3X_r (R'' + 2\Delta R)  - 15X_r^2R'.
\end{equation}
It follows that $\partial_\theta X_r = 0$ because the coefficients of (\ref{m=-1}) depend on $r$ only and are not all zero. With the same argument as before equation (\ref{step1}) implies $\partial_\theta \Omega  = 0$ and (\ref{tcomp}) gives $\partial_\theta X_\theta = 0$ unless $\Omega = 0$, in which case we see from (\ref{pqe2}) that the curvature is constant.

If $m = -4$ we can use (\ref{step1}) to express $\vert X \vert^2$ in terms of $\Omega^2$. Using this the equations (\ref{step2}) become linear in $X^\flat$. They can be solved for $X^\flat$ in terms of $\Omega$, giving an expression of the form
\begin{equation} \label{m=-4}
\rho(\Omega)X^\flat = \sigma(\Omega),
\end{equation}
where $\rho$ is a non-negative function and $\sigma$ is a 1-form (these are also functions of $k, R$ are their derivatives). The function $\rho$ and (the components of) $\sigma$ are polynomials of degree up to $4$ in $\Omega$ with leading terms $\Omega^4$ and $-3(2\lambda + 3R)^{-1}(\text{d} R) \Omega^4$ respectively. The coefficients of $\rho, \sigma_r$ and $\sigma_\theta$ depend on $r$ only. Taking the norm of (\ref{m=-4}) and plugging this into (\ref{step1}) multiplied by $\rho(\Omega)^2$ gives a polynomial constraint in $\Omega$ with leading term $-\frac 34\Omega^{10}$ coming from the $\frac{3}{m}\Omega^2$ term in (\ref{step1}). It follows that $\partial_\theta \Omega = 0$. Finally,  if $\Omega \neq 0$ then $\rho>0$ and hence Lie deriving \eqref{m=-4} along $K$ shows that $\mathcal{L}_K X^\flat=0$, whereas if $\Omega=0$ this follows by Lie deriving  (\ref{pqe2}) along $K$.  

We conclude that whenever $R$ is not constant we have $[K, X] = 0$.\end{proof}

\begin{remark} In the constant curvature case  $X^\flat$ is closed (see Remark \ref{rem_const_R}). Then, locally there is always a Killing vector $K$ commuting with $X$. If locally $X^\flat = \text{d}h$, then $K^\flat = \star \text{d} (e^{-h/m})$.
\end{remark}

\subsection{General axi-symmetric solution}
We are now ready to present the local normal form of the most-general axi-symmetric quasi-Einstein structure in two dimensions.
\begin{theorem}
\label{theorem_normal_K}
 If a solution $(M, g, X)$ to the quasi-Einstein equation (\ref{QEE}) on a two-dimensional surface $M$
admits an axial Killing vector field $K$, then there exist coordinates $(x, \phi)$ on $M$, and a function $B=B(x)$ such that
the quasi-Einstein structure is of the form
\be
\label{B_met_1}
g=B(x)^{-1}\mathrm{d}x^2+B(x)\mathrm{d}\phi^2, \quad 
\xb=\frac{-m}{x^2+\beta^2}\Big(x\mathrm{d} x-B(x)\beta \mathrm{d}\phi\Big),
\ee
where $\beta=\mbox{const}$, and $B$ is given by
\be
\label{hyper_geom_1}
B(x)=
\begin{cases}
b x(x^2+\beta^2)^{-m/2}+c\;  F(x/\beta)(x^2+\beta^2)^{-m/2}
-\frac{\lambda(x^2+\beta^2)}{m+1} & \text{for $m\neq -1, \beta\neq 0$}\\
     x\Big(b-\lambda\arcsinh(x/|\beta|)\Big)\sqrt{x^2+\beta^2}+c(x^2+\beta^2)        & \text{for $m=-1, \beta\neq 0$}\\
     bx^{1-m}+c-\frac{\lambda x^2}{m+1}      & \text{for $m\neq\{-1,  1\},  \beta= 0$}\\
       b\ln{x}+c-\frac{\lambda x^2}{2}       & \text{for $m=1, \beta = 0$}\\
        cx^2+b-\lambda x^2\ln{x}       & \text{for $m=-1, \beta = 0$.}
\end{cases}
\ee
Here $b, c$ are arbitrary constants and $F(x) := _2 F_1 (-\frac{1}{2},-\frac{m}{2}, \frac{1}{2}, -x^2)$ is the hypergeometric function. 
\end{theorem}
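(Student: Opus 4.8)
The plan is to use the Killing symmetry to turn (\ref{QEE}) into an ODE, and then to integrate that ODE explicitly. First I would invoke Proposition~\ref{prop_inh} to assume $[K,X]=0$, leaving the constant-curvature case for the end. Away from the zeros of $K$ one can write $K=\p_\phi$; in isothermal coordinates $g=e^{2\psi}(\mathrm{d}\xi^2+\mathrm{d}\phi^2)$ with $\psi=\psi(\xi)$ by axisymmetry, and setting $\mathrm{d}x:=e^{2\psi}\mathrm{d}\xi$, $B:=e^{2\psi}$ brings the metric to $g=B(x)^{-1}\mathrm{d}x^2+B(x)\,\mathrm{d}\phi^2$, so that $\mathrm{Ric}(g)=-\half B''g$ and $\mathrm{vol}_M=\mathrm{d}x\wedge\mathrm{d}\phi$. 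Since $[K,X]=0$ one has $\xb=P(x)\,\mathrm{d}x+Q(x)\,\mathrm{d}\phi$; I would set $U:=X^x=BP$ and $V:=X^\phi=Q/B$.

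Next I would extract the ODEs. With $\mathrm{Ric}(g)=-\half B''g$, the three independent components of (\ref{QEE}) become ODEs in $(U,V,B)$. The $(x\phi)$-component reads $BV'=\tfrac2m UV$, giving $U=\tfrac m2 B(\ln V)'$ when $V\not\equiv0$; the combination of the $(\phi\phi)$- and $(xx)$-components that cancels $B''$, after eliminating $U$, reduces to $2SS''-(S')^2=4/m^2$ for $S:=1/V$. This last equation has the first integral $(S')^2=c_1S-4/m^2$, which forces $S$ to be a nowhere-vanishing quadratic polynomial in $x$; an affine reparametrisation of $x$ together with a rescaling of $\phi$ then normalises it to $V=\tfrac{m\beta}{x^2+\beta^2}$, whence $U=-\tfrac{mxB}{x^2+\beta^2}$ --- exactly the one-form $\xb$ in (\ref{B_met_1}) (the case $V\equiv0$, equivalently $\mathrm{d}\xb=0$, yields the $\beta=0$ branches directly). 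Feeding the normalised $\xb$ back into the $(xx)$-component then collapses the system to the single linear inhomogeneous ODE
\be
B''+\frac{mx}{x^2+\beta^2}\,B'+\frac{2m\beta^2}{(x^2+\beta^2)^2}\,B+2\lambda=0.
\ee

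Then I would integrate this ODE. One checks directly that $B_1=x(x^2+\beta^2)^{-m/2}$ solves the homogeneous equation; reduction of order gives $B_2=B_1\int(x^2+\beta^2)^{m/2}x^{-2}\,\mathrm{d}x$, and the substitution $x=\beta t$ identifies this quadrature with a Gauss hypergeometric function, so $B_2\propto(x^2+\beta^2)^{-m/2}F(x/\beta)$ with $F$ as in (\ref{hg_function}); for $m\neq-1$ a particular solution is $-\lambda(x^2+\beta^2)/(m+1)$, which yields the first line of (\ref{hyper_geom_1}). The remaining lines are the degenerate values: $m=-1$, where the particular-solution ansatz is resonant and an $\arcsinh$ appears via variation of parameters, and $\beta=0$, where the equation is of Euler type with solutions $x^{1-m}$ and $m=\pm1$ further resonant, producing logarithms. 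Finally, if $R$ is constant and $X\not\equiv0$ then $\xb$ is closed (Remark after Proposition~\ref{prop_inh}), so locally $\xb=\mathrm{d}h$ and $u:=e^{-h/m}$ satisfies $\mathrm{Hess}(u)=\mathrm{const}\cdot u\,g$; the level sets of $u$ yield a warped-product decomposition of the stated form, necessarily with $\beta=0$ and $b=0$, consistently with $2\lambda=(m+1)R$.

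The conceptual heart of the argument is this decoupling: once the symmetry is inherited, $1/V$ must be a quadratic polynomial, $U$ is then determined, and $B$ obeys a linear second-order equation which is classically solvable. The main obstacles I anticipate are (i) the careful bookkeeping of the degenerate values of $m$ and $\beta$ in the last step, so as to land on exactly the five lines of (\ref{hyper_geom_1}) and no more, and (ii) the constant-curvature case, where the given axial $K$ need not be the Killing field commuting with $X$ and one must instead build the coordinates from $K^\flat=\star\,\mathrm{d}(e^{-h/m})$ supplied by the Remark; the rest is routine computation.
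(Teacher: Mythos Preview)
Your proposal is correct and reaches the same linear ODE for $B$ as the paper, but by a genuinely different route.  The paper proceeds via the K\"ahler potential formulation (Proposition~\ref{prop1}): it writes $g=4f_{\zeta\bar\zeta}\,\td\zeta\,\td\bar\zeta$ and $\xb=-\tfrac1m|X|^2\td f$, uses $[K,X]=0$ to force $K(f)=\text{const}$, solves for $f$ in the $\zeta=re^{i\phi}$ chart, and then shows that the fourth-order PDE (\ref{pde}) for $f$ linearises under the reciprocal substitution $B=(\td s/\td x)^{-1}$ to give precisely your equation for $B$.  You instead bypass the K\"ahler potential entirely and work directly with the three components of (\ref{QEE}) in the chart $g=B^{-1}\td x^2+B\,\td\phi^2$: the off-diagonal component determines $U$ in terms of $V$, the trace-free part yields the autonomous ODE $2SS''-(S')^2=4/m^2$ for $S=1/V$, and its first integral $(S')^2=c_1S-4/m^2$ forces $S$ to be a quadratic with negative discriminant, hence $S\propto(x-x_0)^2+\gamma^2$.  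This is a cleaner and more elementary mechanism for producing the denominator $x^2+\beta^2$ than the paper's; the paper's holomorphic coordinate derivation has the advantage of tying the constant $\beta$ to the invariant $K(f)$, but yours makes transparent why the structure of $\xb$ is forced before one ever writes down the $B$-equation.

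Two small points of care.  First, your parenthetical ``$V\equiv0$, equivalently $\td\xb=0$'' is only an implication in one direction: $\td\xb=0$ merely says $Q$ is constant, and the case $Q=\text{const}\neq0$ sits inside your $V\not\equiv0$ branch (it forces $S=B/Q$ quadratic, hence constant curvature, and recovers the particular solution $B\propto x^2+\beta^2$).  This does not break your dichotomy, but the word ``equivalently'' should be weakened.  Second, in the constant-curvature case your plan to replace the given axial $K$ by the Killing field $\star\,\td(e^{-h/m})$ from the Remark is the right move, and in fact is more explicit than the paper's own proof, which simply invokes Proposition~\ref{prop_inh} without separately treating the constant-curvature branch; you should note, however, that the theorem only asserts the \emph{existence} of adapted coordinates, so switching Killing fields is legitimate.
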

\begin{proof} Recall Proposition \ref{prop_inh} guarantees that $[K, X]=0$.
We shall first establish that the coordinates can be chosen so that $K(f)=\mbox{const}$, where $f$ is the 
K\"ahler potential from Proposition \ref{prop1}. 
We can rewrite $\xb$ in (\ref{gandV}) in the form
\begin{equation}
\xb= -\tfrac{1}{m} |X|^2 \td f .  \label{eq:Xf}
\end{equation} 
Therefore, $\mathcal{L}_K X=0$ implies invariance of  $\td f$ along a Killing field $K$.  
Conversely, we can invert the previous relation so $X^\flat= -m\;\td f/|\td f|^2$, which implies that if $\td f$ is Lie-derived by a Killing field $K$ then so is $X^\flat$.  We conclude that  $[K, X]=0$ iff ${\mathcal L}_K(\td f)=\td (K(f))=0$ or equivalently if $K(f)$ is a constant. 

Choose  the holomorphic coordinate $\zeta=re^{i\phi}$ on $M$ (so that the $U(1)$-action is $\zeta\rightarrow e^{i\psi}\zeta$), we see that the metric takes the form (\ref{gandV}) and the K\"ahler potential satisfies
\be
\label{Kandf}
K(f_{\zeta\zb})=0, \quad\mbox{where}\quad
K=i(\zeta\p_\zeta-\zb\p_{\zb}).
\ee
The solution to this is 
\[
f=\rho(r)+R(\zeta)+\bar{R}(\zb), \quad \mbox{where}\quad r=|\zeta|, 
\]
and $\rho(r)$, $R(\z)$, $\bar{R}(\zb)$ are integration functions. Moreover $[K, X]=0$ iff $K(f)=\mbox{const}$ which gives
\[
\z R_{\z}=\frac{1}{2}(\alpha+i\beta)
\]
for some constants $(\alpha, \beta)$.

Now set $r=e^s$ and  $x=d \rho/d s$, to find that
\[
g=\frac{\td x}{\td s}\Big(\td s^2+\td\phi^2), \quad
\xb=-\frac{m}{4}\frac{\td x}{\td s}\Big(\frac{\td s+i\td \phi}{x/2+\zb \bar{R}_{\zb}} + \frac{\td s-i\td \phi}{x/2+\zeta 
R_{\zeta}}\Big).
\]
Setting 
\be
\label{def_of_B}
B=\Big(\frac{\td s}{\td x}\Big)^{-1}
\ee
puts $(g, X)$  in the form (\ref{B_met_1}), where the constant $\alpha$ has been set to $0$ by translating the function $x$.
The PDE (\ref{pde})
reduces to an ODE 
which linearises under the reciprocal transformation (\ref{def_of_B}). The resulting equation
for $B=B(x)$ is
\be
\label{b_eq}
B''+\frac{m}{(\beta^2+x^2)^2}\Big(x(\beta^2+x^2)B'+2\beta^2B\Big)+2\lambda=0, \quad \mbox{where}\quad \beta=\mbox{const}
\ee
which can be solved explicitly by  (\ref{hyper_geom_1}).
\end{proof}

If $X^\flat$ is not closed, which requires $\beta\neq 0$, then without loss of generality we can set $\beta=1$. To see this, set $x=\beta{\tilde{x}}, \phi=\beta^{-1}\tilde{\phi}, B=\beta^2\widetilde{B}$. Rewriting
(\ref{B_met_1}) and (\ref{b_eq}) in terms of $(\widetilde{B}, \tilde{x}, \tilde{\phi})$ and dropping tilde has an effect of setting $\beta=1$, and reducing  (\ref{hyper_geom_1}) to the form (\ref{hyper_geom}).  However, with such a choice the coordinate $\phi$ is no longer $2\pi$-periodic.

\begin{remark}
One can also derive the  local form of the solution \eqref{B_met_1} and \eqref{hyper_geom_1} by generalising the method used for the $m=2$ case in the context of vacuum near-horizon geometries~\cite[Theorem 4.3]{KL13}. If $X^\flat$ is not closed, so we can set $\beta=1$, we can write the Killing field of (\ref{B_met_1}) as $K^\flat= \Gamma X^\flat+ \frac{m}{2} \td\Gamma$ where $\Gamma=(1+x^2)/m$ is a smooth positive function.  This generalises the form of the Killing field for the $m=2$ case, that was proven must exist on  $S^2$~\cite{DL23}. In the case $m=2, \lambda=0$ the local form of \eqref{B_met_1}, as well as its extension to a two--sphere was found in \cite{LP1}
in the context of extremal horizons.

  \end{remark}

\section{Extension to a two-sphere}
\label{sectionregular}
We will make use of the following standard result. We include a proof for completeness.
\begin{lemma}
\label{lem:gsmooth} 
The metric (\ref{B_met}) extends to a smooth metric on $S^2$ if and only if there exist adjacent simple zeros $x_1<x_2$ of $B(x)$ such that $B(x)>0$ for all $x_1<x<x_2$ and
\begin{equation}
B'(x_1)=-B'(x_2)   \label{eq:smooth}
\end{equation}
where $\phi \sim \phi+p$ is periodically identified with period $p= 4\pi/|B'(x_i)|$.    
\end{lemma}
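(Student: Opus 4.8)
The plan is to study the metric \eqref{B_met} near a zero of $B$ by passing to a ``radial'' coordinate and extracting the condition for the would‑be axis to be a smooth interior point; this is the standard analysis of removing an apparent conical singularity, the only subtlety being to upgrade a leading‑order matching to genuine $C^\infty$ smoothness.

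First I would record that on any interval where $B>0$ the substitution $\rho(x)=\int_{x_1}^x B(t)^{-1/2}\,\td t$ turns $g$ into $\td\rho^2+B(x(\rho))\,\td\phi^2$, so that a zero $x_1$ of $B$ corresponds to $\rho=0$; for $B>0$ just to the right of a \emph{simple} zero one needs $B'(x_1)>0$, and likewise $B'(x_2)<0$ at an upper endpoint. For the ``if'' direction I would then write $B(x)=(x-x_1)\hat B(x)$ with $\hat B$ smooth and $\hat B(x_1)=B'(x_1)>0$, and substitute $x-x_1=w^2$ to get $\rho=\int_0^w 2\,\hat B(x_1+s^2)^{-1/2}\,\td s$, a smooth \emph{odd} function of $w$ with $\rho'(0)=2B'(x_1)^{-1/2}>0$. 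By the inverse function theorem $w$ is a smooth odd function of $\rho$, hence $x-x_1=w^2$, and therefore $B(x(\rho))$, is a smooth \emph{even} function of $\rho$, so $B(x(\rho))=\rho^2 H(\rho^2)$ with $H$ smooth and $H(0)=\tfrac14 B'(x_1)^2>0$. The metric reads $\td\rho^2+\rho^2 H(\rho^2)\,\td\phi^2$ near $\rho=0$, and the standard criterion for such warped products (pass to Cartesian coordinates $\rho\cos\tilde\phi,\rho\sin\tilde\phi$ with $\tilde\phi=\sqrt{H(0)}\,\phi$) shows it extends to a smooth metric on a coordinate disc precisely when $\phi$ is $2\pi/\sqrt{H(0)}=4\pi/|B'(x_1)|$‑periodic. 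The identical analysis at $x_2$ forces the period to be $4\pi/|B'(x_2)|$; for a single consistent identification we need $|B'(x_1)|=|B'(x_2)|$, i.e. $B'(x_1)=-B'(x_2)$ in view of the signs. Gluing the two discs along the annulus $\{x_1<x<x_2\}$ yields a closed simply connected surface, hence $S^2$.

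For the converse, assume $g$ extends smoothly to $S^2$ compatibly with the $U(1)$ action. The generator $K=\p_\phi$ is a Killing field with closed orbits, so on $S^2$ it has exactly two fixed points; off these two points $M$ is a cylinder carrying the metric \eqref{B_met} with $B=|K|^2>0$, and $x$ runs over an open interval. Near a fixed point, smoothness of $g$ means that in geodesic polar coordinates $(\rho,\tilde\phi)$ one has $g=\td\rho^2+\psi(\rho)^2\,\td\tilde\phi^2$ with $\psi$ smooth and odd, $\psi(0)=0$, $\psi'(0)=1$, and $\tilde\phi$ $2\pi$‑periodic; matching with $\td\rho^2+B\,\td\phi^2$ forces $x$ to extend to a finite endpoint $x_1$ at which $B$ has a simple zero, and — running the previous computation backwards — pins down $|B'(x_1)|=4\pi/p$. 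The same at the second fixed point gives the other endpoint $x_2$, the relation $B'(x_1)=-B'(x_2)$, and $B>0$ on $(x_1,x_2)$ since $K$ has no further zeros.

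The main obstacle is precisely the $C^\infty$ statement in the ``if'' direction: checking that $\td\rho^2+\rho^2 H(\rho^2)\,\td\phi^2$, after rescaling $\phi$, really is smooth across the axis and not merely continuous or conical. This is handled by the half‑power substitution above, which works because every function occurring is smooth — guaranteed here either by the real‑analyticity of Lemma \ref{RAlem} or simply by the explicit form \eqref{hyper_geom} of $B$. A minor point is to confirm that the glued surface is $S^2$ and not of higher genus, which is immediate from $\chi=2$ for two discs glued along an annulus.
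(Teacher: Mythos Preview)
Your proof is correct and follows essentially the same approach as the paper: pass to the geodesic coordinate $\rho=\int B^{-1/2}\,\td x$ near a zero of $B$, remove the conical singularity by fixing the period of $\phi$, and identify the resulting closed surface. You are in fact more careful about $C^\infty$ smoothness (via the half-power substitution $x-x_1=w^2$) than the paper, which only checks the leading circumference/radius limit; for the topology the paper invokes Poincar\'e--Hopf and Gauss--Bonnet where you use the two-disc decomposition, but these are equivalent here.
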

\begin{proof} 
First note that any Killing field on $S^2$ must have closed orbits.
The metric  (\ref{B_met}) is valid in a chart $(x, \phi)$ such that $B(x)>0$. In this chart $K= \partial_\phi$ is a Killing field and $|K|^2= B$. Since the orbits of $K$ are closed the coordinate  must be periodically identified $\phi\sim \phi+p$ for some period $p>0$.  Let $(x_1, x_2)$ be the maximal interval such that $B(x)>0$ for all $x\in (x_1, x_2)$ (including possibly an infinite or semi-infinite interval).  Requiring that $g$ extends to a smooth metric on a compact surface $M$  implies that $(x_1, x_2)$ must be a finite interval, so by maximality and continuity of $|K|^2$ we must have $B(x_1)=B(x_2)=0$. Thus $|K|^2=B>0$ for $x\in (x_1, x_2)$ and $ |K|^2\to 0$ as $x\to x_1$ or $ x\to x_2$, so we may add points corresponding to $x=x_1$ and $x=x_2$ to obtain a compact surface $M$ (these could correspond to the same point). These points are zeros of the Killing field $K$.

For the fixed point $x=x_1$ we can introduce geodesic coordinates $(s, \phi)$ by $s := \int_{x_1}^x \td x/\sqrt{B(x)}$ in which the metric takes the form $g= \td s^2+ f(s) \td \phi^2$ where $f(s):= B(x(s))$.  In general, the metric has a conical singularity at $s=0$ which is absent iff, for closed constant-$s$ curves, the ratio of the circumference to  radius approaches $2\pi$ as $s\to 0$, that is, 
\begin{equation}
2\pi = \lim_{s\to 0}  \frac{p \sqrt{f(s)}}{s}  = p  \lim_{s\to 0} \frac{\td}{\td s} \sqrt{f(s)} = \frac{p}{2} B'(x_1) \qquad \implies \qquad  p= \frac{4\pi}{B'(x_1)} \; , 
\end{equation}
where $p$ is the period of an orbit of $K=\partial_\phi$.    An identical argument can be applied to the fixed point $x=x_2$ by defining $s:= \int_x^{x_2} \td x/\sqrt{B(x)}$  which shows that the conical singularity at $s=0$ is absent iff $p =- 4\pi / B'(x_2)$.

The vector field $K$ has one or two simple zeros so $M=\RP^2$ or $S^2$ by the Poincar\'e-Hopf theorem. If we assume that $M$ is orientable, then
it is diffeomorphic to $S^2$, which can independently
be verified by the Gauss-Bonnet theorem:
The scalar curvature is $R=-2 B''(x)$ and hence
\be
\label{GB_calculation}
\frac{1}{4\pi} \int_M R \; \mbox{vol}_M = -\frac{p}{4\pi} [B'(x)]_{x_1}^{x_2}= 2  \; ,
\ee
so the surface is $M=S^2$.
\end{proof}

\begin{remark}
In the case of $\RP^2$ the two zeros of $B$ are identified by the antipodal map, and correspond to the same point on $M$. In (\ref{GB_calculation}) the LHS has to be halved to take the $\Z_2$ quotient into account. In
fact, all global solutions of Theorem \ref{main_theo_md} descend to $\RP^2$.
\end{remark}
\subsection{Hypergeometric function}
To investigate the regularity conditions imposed by  Lemma \ref{lem:gsmooth}  on the local form
(\ref{B_met}) we will need a couple of results about the hypergeometric function.
\begin{lemma}
The function $F(x) := _2 F_1(-1/2,-m/2, 1/2, -x^2)$ admits an integral representation
\be
\label{formula_for_F}
F(x)=1-x\int_0^x \frac{(y^2+1)^{\frac{m}{2}}-1}{y^2}\; \mathrm{d} y.
\ee
It is monotonically increasing if $x>0$  and $m<0$ and monotonically decreasing for $x>0$ if $m>0$.
\end{lemma}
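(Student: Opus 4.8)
The plan is to show that $F$ satisfies a simple second–order linear ODE, to integrate that ODE once by hand so as to get a first–order relation, and then to extract both the integral representation and the monotonicity from this relation. First I recall that $w={}_2F_1(-\tfrac12,-\tfrac m2,\tfrac12,z)$ solves the hypergeometric equation $z(1-z)w''+[c-(a+b+1)z]w'-abw=0$ with $(a,b,c)=(-\tfrac12,-\tfrac m2,\tfrac12)$, so that $ab=\tfrac m4$ and $a+b+1=\tfrac{1-m}{2}$. Substituting $z=-x^2$ and clearing denominators turns this into
\be
(1+x^2)F''-mxF'+mF=0 .
\ee
When $-m/2$ is not a non-positive integer this holds a priori only on $(-1,1)$, the disc of convergence of the series; at the end it is extended to all of $\R$ by real-analyticity, which amounts to saying that (\ref{formula_for_F}) \emph{defines} the analytic continuation of $F$.

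The key step is to integrate this equation once. Set $\Phi:=F-xF'$, so $\Phi'=-xF''$; the displayed equation gives $(1+x^2)F''=mxF'-mF=-m\Phi$, hence $\Phi$ solves the first-order linear ODE $\Phi'=\tfrac{mx}{1+x^2}\Phi$, whose solutions are $C(1+x^2)^{m/2}$. Since $F$ is a power series in $x^2$ we have $F(0)=1$ and $F'(0)=0$, so $\Phi(0)=1$ and $C=1$, i.e.
\be
F(x)-xF'(x)=(1+x^2)^{m/2}.
\ee
Differentiating gives $F''(x)=-m(1+x^2)^{m/2-1}$, and integrating from $0$ with $F'(0)=0$ yields $F'(x)=-m\int_0^x(1+t^2)^{m/2-1}\,\mathrm{d}t$. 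Since $(1+t^2)^{m/2-1}>0$, for $x>0$ the sign of $F'(x)$ is opposite to that of $m$, which is exactly the claimed monotonicity (decreasing for $m>0$, increasing for $m<0$). (Alternatively, the same formula for $F'$ follows directly from $\tfrac{\mathrm d}{\mathrm dz}\,{}_2F_1(a,b;c;z)=\tfrac{ab}{c}\,{}_2F_1(a+1,b+1;c+1;z)$ together with the elementary identity $x\,{}_2F_1(\tfrac12,-a;\tfrac32;-x^2)=\int_0^x(1+t^2)^a\,\mathrm{d}t$, obtained by termwise integration.)

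For the integral representation I rewrite $F-xF'=(1+x^2)^{m/2}$ as a first-order equation for $F$ and divide by $x^2$; using $\frac{\mathrm{d}}{\mathrm{d}x}\big(\frac{F}{x}\big)=\frac{xF'-F}{x^2}$ this becomes
\be
\frac{\mathrm{d}}{\mathrm{d}x}\!\left(\frac{F(x)-1}{x}\right)=-\,\frac{(1+x^2)^{m/2}-1}{x^2}.
\ee
The right-hand side is continuous at $x=0$ because the numerator vanishes to second order there, and $\big(F(x)-1\big)/x\to F'(0)=0$ as $x\to0$, so integrating from $0$ to $x$ gives precisely (\ref{formula_for_F}). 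Both sides of (\ref{formula_for_F}) are real-analytic on all of $\R$ — the right-hand side manifestly, and $F$ because $F''(x)=-m(1+x^2)^{m/2-1}$ — so the identity, established on $(-1,1)$, holds for every real $x$.

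I do not expect a genuine obstacle. The only point requiring a little care is the behaviour at $x=0$, where the argument divides by $x$ and $x^2$ and separates variables in $\Phi$; this is harmless because $\Phi(0)=F(0)=1\neq0$, because $\big((1+x^2)^{m/2}-1\big)/x^2$ extends continuously across the origin, and because $F$ is an even power series with $F(0)=1$, so $F'(0)=0$ and $\big(F(x)-1\big)/x$ vanishes at $x=0$. Everything else is a routine change of variables in the hypergeometric equation followed by one separation of variables.
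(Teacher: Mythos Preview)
Your proof is correct and follows essentially the same route as the paper: both hinge on the identity
\[
\frac{\td}{\td x}\!\left(\frac{F(x)}{x}\right)=-\frac{(1+x^2)^{m/2}}{x^2},
\]
which is equivalent to your $F-xF'=(1+x^2)^{m/2}$, and then integrate it (with the ``$-1$'' inserted to regularise at the origin) to obtain (\ref{formula_for_F}).  The paper simply records this identity as the key observation, whereas you derive it from the hypergeometric ODE via $\Phi:=F-xF'$, which makes the argument self-contained.  Your monotonicity argument is also a little cleaner: from $F-xF'=(1+x^2)^{m/2}$ you extract $F'(x)=-m\int_0^x(1+t^2)^{m/2-1}\,\td t$, so the sign of $F'$ is immediate, while the paper reads the monotonicity off the integral representation (which requires differentiating (\ref{formula_for_F}) and checking signs of two terms).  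The analytic continuation point you raise is handled in the paper implicitly; your treatment of it is correct but could be shortened by noting that ${}_2F_1(a,b;c;z)$ continues analytically to $\mathbb{C}\setminus[1,\infty)$ and $-x^2\le 0$ avoids the cut.
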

\begin{proof}
The key observation is that  for all $m\neq 0$
\be
\label{Hyp_geom_id}
\frac{\td}{\td x} \Big( x^{-1} F(x)\Big)  =-x^{-2}(1+x^2)^{m/2}.
\ee
Moreover, from the hypergeometric series we have $F(x)=1+O(x^2)$. Therefore integrating 
(\ref{Hyp_geom_id}) gives the integral representation (\ref{formula_for_F}).
Noting that $(y^2+1)^{m/2}$ is greater (smaller)
than $1$ if $m$ is positive (negative) establishes the monotonicity claim.
\end{proof}
The representation (\ref{formula_for_F}) implies that 
the asymptotic form
of $F(x)$ as $x\rightarrow \infty$ is
\be
\label{asymptoticF}
F(x) \sim
\begin{cases}
  \sqrt{\pi}\frac{\Gamma\Big(-\frac{m}{2}+\frac{1}{2}\Big)}{\Gamma\Big(-\frac{m}{2}\Big)}x& \text{for $m\in (-\infty, 0)\cup(0, 1)$}\\
-x\ln{x} & \text{for $m=1$}\\
-\frac{1}{m-1}x^m  &\text{for $m>1$.}
\end{cases}
\ee
The coefficient of the linear term is negative for $m\in (0, 1)$ and positive for $m<0$.

The next lemma shows the existence of two simple zeros of $F$ for $m>0$.
\begin{lemma}
\label{lemmahc2}
For  $m>0$ the function $F$ is monotonically decreasing faster than a linear function: 
For any $x_*>0$ there exists a constant $a(x_*)$ such that $F(x)<1-a(x_*)x$ if $x>x_*$. In particular $F$ has precisely two simple zeros.
\end{lemma}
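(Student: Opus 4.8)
The plan is to read everything off the integral representation (\ref{formula_for_F}). Set
\[
G(x):=\int_0^x\frac{(y^2+1)^{m/2}-1}{y^2}\,\mathrm{d}y,
\]
so that $F(x)=1-xG(x)$. For $m>0$ the integrand is strictly positive on $(0,\infty)$, hence $G$ is strictly positive and strictly increasing there. Thus for a fixed $x_*>0$ and every $x>x_*$ one has $G(x)>G(x_*)>0$, and therefore $F(x)=1-xG(x)<1-G(x_*)\,x$. This proves the first claim with $a(x_*):=G(x_*)$, a genuinely positive constant.

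For the counting of zeros I would argue as follows. From the hypergeometric series $F(0)=1>0$, and $F$ is even since it depends on $x$ only through $x^2$. On $(0,\infty)$ the preceding lemma (equivalently $F'(x)=-G(x)-xG'(x)<0$) shows $F$ is strictly decreasing, while the linear bound just established forces $F(x)\to-\infty$ as $x\to\infty$. By the intermediate value theorem there is a unique $x_0>0$ with $F(x_0)=0$; by evenness $-x_0$ is the only other zero, and $x=0$ is not a zero. To see these zeros are simple, differentiate (\ref{Hyp_geom_id}) (or use $F=1-xG$ directly) to obtain $F'(x)=x^{-1}F(x)-x^{-1}(1+x^2)^{m/2}$, whence $F'(x_0)=-x_0^{-1}(1+x_0^2)^{m/2}\neq0$ and, since $F'$ is odd, $F'(-x_0)=-F'(x_0)\neq0$.

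There is no serious obstacle here. The only point that needs care is that the monotonicity invoked is \emph{strict}, so that the zero of $F$ on $(0,\infty)$ is genuinely unique; this is immediate from the strict positivity of the integrand in (\ref{formula_for_F}) when $m>0$. Everything else is elementary calculus, so I expect the proof to be short.
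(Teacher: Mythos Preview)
Your proof is correct and essentially identical to the paper's: both define the same integral (the paper calls the integrand $f$, you call the primitive $G$), derive the linear upper bound $F(x)<1-G(x_*)\,x$ from monotonicity of $G$, use this with evenness to locate exactly two zeros, and verify simplicity via the identity $F'(x_0)=-x_0^{-1}(1+x_0^2)^{m/2}$ obtained from (\ref{Hyp_geom_id}). The only cosmetic difference is that the paper argues existence of a zero by contradiction (assuming $F>0$ everywhere contradicts the unbounded linear term), whereas you invoke $F\to-\infty$ and the intermediate value theorem directly.
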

\begin{proof}
It is convenient to define 
 \[ f(t)= \frac{(1+t^2)^{m/2} - 1}{t^2}  \]
 which for $m>0$  is a strictly positive function for $t>0$.  Then we can write $F(x)= 1 - x \int_0^x f(t) \td t$. Note that $F(0)=1$ so $F(x)>0$ near $x=0$.  
  Positivity of $f(t)$ implies that for $y>x>0$ we have $\int_0^y f(t) \td t > \int_0^x f(t) \td t$.  Thus $F(x)$ for $m>0$ is monotonically decreasing for $x>0$. However, we need a little more.  In particular, fix $x_*>0$, then for all $x>x_*$
 $$
  x \int_0^x f(t) \td t >  x \int_0^{x_*} f(t)  \td t
  $$
  and hence
  $$
  F(x)<1- x\int_0^{x_*} f(t)  \td t  \;.
  $$
  Thus $F(x)$ for $m>0$ and $x>x_*$ is monotonically decreasing faster than a linear function.  
  
  Now assume for contradiction that $F(x)>0$ for all $x>0$.  But then  for all $x>x_*$ we have  that $x \int_0^{x_*} f(t)  \td t <1$ which is a contradiction.  Therefore it must be the case that there exists a zero of $F(x)$ at say $x=x_2>0$. Furthermore, monotonicity of $F(x)$ means that this is the only zero for $x>0$. Finally, recall that $F(x)$ is an even function and therefore it must have precisely two zeros at $x=\pm x_2$.  The fact they are simple follows from evaluating \eqref{Hyp_geom_id} at the roots which gives
  $$
  F'(x_i)= - \frac{( 1 +x_i^2)^{m/2}}{x_i}
  $$
  and so $F'(x_2)=-F'(x_1)<0$.
\end{proof}

\subsection{The parity of $B$}
The function $B$ appearing in the local normal form (\ref{B_met}) is a sum of an even part given by the multiple
of the hypergeometric function, and an odd part. We shall now show that the regularity conditions of Lemma \ref{lem:gsmooth} force $B$ to be even.
\begin{lemma}
\label{parity_lemma}
The metric (\ref{B_met}) extends to a smooth metric on $M=S^2$  if and only if the function $B$ given by (\ref{hyper_geom}) is even (and so $b=0$).
\end{lemma}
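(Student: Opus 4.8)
The plan is to read off both directions from the explicit normal form. First, in (\ref{hyper_geom}) every term is even in $x$ except the one proportional to $b$ — recall $F(x)$ and $(x^2+1)^{-m/2}$ are even, and $x\arcsinh(x)$ is even — so $B$ is even precisely when $b=0$, which disposes of the parenthetical clause. The substance of the Lemma is therefore the implication \emph{extends to $S^2$ $\Rightarrow$ $b=0$}; the converse (that $b=0$, together with $c$ in the range of the second part of Theorem \ref{main_theo_md}, yields a smooth metric) is immediate, since for even $B$ the derivative $B'$ is odd, so a symmetric pair of simple zeros $\pm x_*$ with $B>0$ on $(-x_*,x_*)$ automatically satisfies the matching condition (\ref{eq:smooth}), and Lemma \ref{lem:gsmooth} applies.

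So assume (\ref{B_met}) extends to $S^2$. By Lemma \ref{lem:gsmooth} there are adjacent simple zeros $x_1<x_2$ of $B$ with $B>0$ on $(x_1,x_2)$ and $B'(x_1)=-B'(x_2)$; since $x_1$ is simple and $B>0$ to its right, $B'(x_1)>0$. My first step is to evaluate $B'$ at a zero: differentiating (\ref{hyper_geom}), using $F'(x)=\big(F(x)-(1+x^2)^{m/2}\big)/x$ (a consequence of (\ref{Hyp_geom_id})), and then eliminating $bx_i+cF(x_i)$ via $B(x_i)=0$, one gets, for $m\neq-1$ and $x_i\neq0$,
\be
B'(x_i)=\frac{1}{x_i}\Big(\frac{\lambda}{m+1}-c-\lambda x_i^2\Big),
\ee
and $B'(x_i)=-\tfrac{1}{x_i}(c+\lambda x_i^2)$ when $m=-1$. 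Plugging this into $B'(x_1)+B'(x_2)=0$ and clearing denominators yields, for $m\neq-1$,
\be
(x_1+x_2)\Big(\frac{\lambda}{m+1}-c-\lambda x_1x_2\Big)=0,
\ee
(and $(x_1+x_2)(c+\lambda x_1x_2)=0$ for $m=-1$). Hence \emph{either} $x_2=-x_1$, \emph{or} $c$ equals the special value $\tfrac{\lambda}{m+1}-\lambda x_1x_2$ (resp.\ $-\lambda x_1x_2$). In the first alternative the zeros are symmetric, and subtracting the two equations $B(x_1)=0$ and $B(-x_1)=0$ kills the even part of $B$ and leaves $2b\,x_1(x_1^2+1)^{-m/2}=0$ (resp.\ $2b\,x_1\sqrt{x_1^2+1}=0$), forcing $b=0$ because $x_1\neq0$.

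It remains to exclude the second alternative, and this is the main obstacle. There the formula above gives $B'(x_1)=\lambda(x_2-x_1)$, so $B'(x_1)>0$ forces $\lambda>0$; the degenerate possibility $0\in\{x_1,x_2\}$ is treated separately (then $B(0)=0$ pins $c=\tfrac{\lambda}{m+1}$, resp.\ $c=0$, and $B'(0)=b$). Substituting the special value of $c$ and eliminating $b$ from $B(x_1)=B(x_2)=0$ reduces everything to a transcendental relation among $x_1,x_2,m$. For $m=-1$ this collapses (in the zero-endpoint subcase) to $x_2=\arcsinh(x_2)$, and in general to its two-point analogue, which is impossible because $\arcsinh(t)<t$ for $t>0$; this settles $m=-1$ cleanly. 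For $m\neq-1$ one must feed in the monotonicity/asymptotics of the hypergeometric function from \S\ref{sectionregular}: concretely, that
\[
\psi(x):=(x^2+1)^{\frac m2+1}-(m+1)x^2-F(x)
\]
keeps the sign of $m(m+1)$ on $(0,\infty)$ and vanishes only at $x=0$ (for instance $\psi(x)=x^4$ when $m=2$), which follows from the integral representation (\ref{formula_for_F}) and the sign of $(y^2+1)^{m/2}-1$. Since the zero-endpoint subcase requires a positive root of $\psi$, and the general case requires a relation whose leading behaviour is $\tfrac{m(m+1)}{6}x_1x_2(x_2-x_1)^3+\cdots=0$, the sign-definiteness of $\psi$ (combined, where $B>0$ forces a sign on $B(0)$, with positivity of $B$ on $(x_1,x_2)$) gives the contradiction. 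Therefore $b=0$ and $B$ is even. I expect the sharp sign statement about $\psi$ — a monotonicity property of $F$ slightly beyond the Lemmas already isolated — to be the one genuinely delicate ingredient; the case split and the derivative-at-a-zero identity are routine.
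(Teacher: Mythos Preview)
Your reduction up to the dichotomy is correct and matches the paper: the identity for $B'(x_i)$ comes from the first-order ODE \eqref{Bode}, and clearing denominators gives exactly the paper's alternative $x_1+x_2=0$ versus $x_1x_2=\gamma$ with $\gamma=-B(0)/\lambda$. Your observation $B'(x_1)=\lambda(x_2-x_1)$ in the second branch cleanly disposes of $\lambda\leq 0$, and the zero-endpoint subcase does reduce to a positive root of $\psi$; your sign claim for $\psi$ is right, indeed $\psi(x)=(m+1)x\int_0^x\big[(1+y^2)^{m/2}-1\big]\td y$ so $\psi$ has the sign of $m(m+1)$ on $(0,\infty)$.

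The gap is the generic subcase $m\neq-1$, $\lambda>0$, $x_1,x_2\neq 0$. You only state a ``leading behaviour'' for the resulting relation and invoke sign-definiteness of $\psi$; that is not a proof. If one actually eliminates $b$ from $B(x_1)=B(x_2)=0$ and substitutes $c=\tfrac{\lambda}{m+1}-\lambda x_1x_2$, the relation collapses to
\[
\int_{x_1}^{x_2}(y^2-x_1x_2)\,g(y)\,\td y=0,\qquad g(y)=\frac{(1+y^2)^{m/2}-1}{y^2},
\]
and when $x_1,x_2$ share a sign the integrand changes sign at $\sqrt{x_1x_2}$, so sign-definiteness of $\psi$ alone is inconclusive. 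What works is: for $x_1x_2<0$ the integrand has the sign of $m$ throughout; for $x_1x_2>0$ the substitution $y\mapsto x_1x_2/y$ on the half above $\sqrt{x_1x_2}$ reduces the integral to $\int_{x_1}^{\sqrt{x_1x_2}}(y^2-x_1x_2)\,y^{-2}\big[(y^2+1)^{m/2}-(x_1^2x_2^2/y^2+1)^{m/2}\big]\td y$, whose integrand again has the sign of $m$. Either way the integral cannot vanish. (For $m=-1$ your ``two-point analogue'' is $\theta=\sinh\theta$ with $\theta=\arcsinh x_1-\arcsinh x_2\neq 0$, indeed impossible.)

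Once completed this way your route is genuinely different from the paper's and somewhat more uniform. The paper works with $\hat F(x)=B(x)(x^2+1)^{m/2}/(\lambda x)$; for $\gamma>0$ it uses $G(x)=\hat F(x)-\hat F(\gamma/x)$ and a Rolle-type argument, while for $\gamma<0$ it introduces a further auxiliary $H(u)$ and splits into the three ranges $m>0$, $m\in(-1,0)$, $m<-1$. Your integral identity treats all signs of $\gamma$ and all $m\neq -1$ at once; the substitution $y\mapsto\gamma/y$ is essentially the paper's $G$-trick, but packaged so that the $\gamma<0$ analysis becomes trivial rather than the hardest case.
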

\begin{proof} First observe that the function $B$ given by (\ref{hyper_geom}) satisfies the following first order ODE
\be
\label{Bode}
\frac{\td}{\td x} \left( \frac{B(x)(x^2+1)^{m/2} }{x}\right) =- \frac{(1+x^2)^{m/2}}{x^2} ( \alpha +\lambda x^2), \qquad \alpha:= \begin{cases}c- \frac{\lambda}{m+1} & \text{for $m\neq -1$} \\
c & \text{for $m=-1$} \end{cases}
\ee
where to evaluate the derivative we used the identity (\ref{Hyp_geom_id}).    We also note that $B(0)=\alpha$.

    Let $x_1 < x_2$ be two adjacent zeros of $B$ such that $B'(x_1) = -B'(x_2) = \delta > 0$. Assume first that $\lambda \neq 0$ and $m\neq -1$  and set
\begin{equation}
    \hat{F}(x) := \frac{B(x)(x^2 +1)^{\frac m2}}{\lambda x}  \; . 
\end{equation}
Setting $\gamma = -\frac{B(0)}{\lambda}$ and using (\ref{Bode}) shows that 
\begin{equation} \label{Fprime}
    \hat{F}'(x) = -\frac{(x^2+1)^{\frac m2}(x^2 - \gamma)}{x^2}.
\end{equation}
If $x_1$ and $x_2$ are non-zero we can write
\begin{equation} \label{Fhatreg}
    \hat{F}'(x_1) = \frac{\delta(x_1^2 +1)^{\frac m2}}{\lambda x_1} \hspace{.2cm}\text{and}\hspace{.2cm} \hat{F}'(x_2) = -\frac{\delta(x_2^2 +1)^{\frac m2}}{\lambda x_2}.
\end{equation}
If $x_1=0$ then $B(x_1)=0$ and since we require $B$ to be smooth at $x_1$,  then $B(x)/x$ must a smooth function and hence $\hat{F}(x)$ is well defined at $x=0$ too. Therefore $\hat{F}$ is well-defined on all of 
$\mathbb{R}$ and is strictly decreasing with $\hat{F}(0) = \frac{\delta}{\lambda}$ and $\hat{F}'(x) = -(x^2 + 1)^{\frac m2}$.

 Equation (\ref{Fhatreg}) is still valid for $x_2$ and gives $x_2 = \frac{\delta}{\lambda}$. Since $x_2$ has to be positive this already excludes $\lambda < 0$. Moreover, depending on the sign of $m$, $\hat{F}'(x)$ is either always greater than $-1$ or less than $-1$. In both cases we can not have $\hat{F}(x_2) = 0$. Similarly, if $x_2 = 0$ we have $\hat{F}(0) = -\frac{\delta}{\lambda}$ and $x_1 = -\frac{\delta}{\lambda}$, but $\hat{F}$ decreases either too fast $(m > 0)$ or too slow $(m < 0)$ for this to happen. Hence $x_1$ and $x_2$ are both non-zero.

Eliminating $\delta$ between the two equations in (\ref{Fhatreg}) leads to
\begin{equation}
    \left(1 - \frac{\gamma}{x_1x_2}\right)(x_1 + x_2) = 0.
\end{equation}
There are now three cases to consider:
\begin{itemize} 
\item If $\lambda = 0$ repeating the calculation above with $\hat{F}$ replaced by $\lambda\hat{F}$ shows that we must have $x_1 = -x_2$, and $b=0$.
\item If $\lambda<0$ and  $x_1 = -x_2$ we find $b = 0$ and the result follows, so  let us instead
assume $x_1x_2 = \gamma$. If  $\gamma < 0$ we have $B(0) < 0$ and  the zeros have opposite sign, but $B$ is not positive between the zeros. If $\gamma > 0$ the zeros have the same sign and from (\ref{Fhatreg}) we need $\hat{F}' (x_2)$ to have the same sign as $x_2$. Since $x_1 < x_2$ we either have  $x_2 > \sqrt{\gamma}$ or $-\sqrt{\gamma} < x_2 < 0$. However, from (\ref{Fprime})  we find that  $\hat{F}'(x_2)$ and $x_2$ have opposite signs which is a contradiction.
\item If $\lambda>0$, we again assume that $x_1x_2 = \gamma$ as otherwise $x_1=-x_2$ and $b=0$.
Define
\begin{equation}
    G(x) := \hat{F}(x)  - \hat{F}\left(\frac{\gamma}{x}\right).
\end{equation}
Note that $G(x_1) = G(x_2) = 0$. We have
\begin{equation} \label{Gprime}
    G'(x) = \hat{F}'(x) +\frac{\gamma}{x^2}\hat{F}'\left(\frac{\gamma}{x}\right) = \left(\frac{\gamma}{x^2}-1\right)\left((x^2+1)^\frac m2 - \left(\frac{\gamma^2}{x^2} + 1\right)^{\frac m2}\right).
\end{equation}
The function $G'(x)$ vanishes if and only if $x^2 = \vert\gamma\vert$. If $\gamma > 0$ we have $G(\pm \sqrt{\gamma}) = 0$. From (\ref{Gprime}) we know that $G$ is monotonic between $x_2$ and sgn$(x_2)\sqrt{\gamma}$, contradicting the fact that $G(x_2) = 0$.

If $\gamma < 0$ we must have $x_1 < 0$ and $x_2 > 0$ and $B(0) > 0$. We will show that $G$ has no zeros in this case.

If $\gamma < 0$ and $m > 0$ the function $G$ has a maximum at $x = \sqrt{-\gamma}$. Since $x_2 > 0$ in order for $G(x_2) = 0$ to be possible we need $G(\sqrt{-\gamma}) \geq 0$. To show that this is not the case, set $u := \frac c\lambda > \frac{1}{m+1}$ and consider the function
\begin{align}
H(u) & := \frac{1}{2u}\left(\hat{F}(\sqrt{-\gamma}) - \hat{F}(-\sqrt{-\gamma})\right)  \\ &= \frac{1}{\sqrt{u - \tfrac{1}{m+1}}}\left(F\left(\sqrt{u - \tfrac{1}{m+1}}\right) - \frac{(u+1-\tfrac{1}{m+1})^{\frac m2 + 1}}{u(m+1)}\right).  \nonumber
\end{align}
Note that $H$ tends to zero as $u \to \frac{1}{m+1}$. A calculation using  (\ref{Hyp_geom_id})  shows
\begin{equation}
    H'(u) = -\frac{m(1+u-\tfrac{1}{m+1})^{\frac m2}\sqrt{u - \tfrac{1}{m+1}}}{(m+1)u^2}.
\end{equation}
We see that $H$ is strictly decreasing, implying that $G(\sqrt{-\gamma}) = 2uH(u) < 0$.

If $\gamma < 0$ and $m \in (-1, 0)$ we find that $G$ has a minimum at $x = \sqrt{-\gamma}$, so in order for $G$ to have zeros we require $G(\sqrt{-\gamma}) \leq 0$. However, this is not possible since $H$ is now increasing and therefore positive for $u > \frac{1}{m+1}$.

If $\gamma < 0$ and $m < -1$ the function $G$ again has a minimum at $\sqrt{-\gamma}$ and $H$ is decreasing. For $u < 0$ this implies $H$ is negative and so $G(\sqrt{-\gamma}) > 0$. For $u = 0$ direct evaluation shows $G(\sqrt{-\gamma}) > 0$. If $u$ is positive, so is $H$ because using 
(\ref{asymptoticF}) it tends to a positive constant as $u \to \infty$. We find $G(\sqrt{-\gamma}) > 0$, i.e. $G$ has no zeros.
\end{itemize}
Finally, if $m = -1$ imposing the conditions from Lemma \ref{lem:gsmooth}  again leads to $x_1 = -x_2$ with similar arguments as above, indeed, the only part that requires modification is the $\lambda>0, \gamma<0$ case.
\end{proof}
Apart from establishing the even parity of $B$ the argument above gives the following.
\begin{cor}
\label{cor_parity}
Let $x_1, x_2$ with $x_1<x_2$ be the adjacent zeros of $B$ as in Lemma \ref{lem:gsmooth}. Then 
$x_1=-x_2$ and in particular $B(0)>0$.
\end{cor}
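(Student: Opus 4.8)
The plan is to extract this statement directly from the case analysis already carried out in the proof of Lemma \ref{parity_lemma}, rather than re-deriving anything from scratch. Recall that in that proof, given adjacent simple zeros $x_1<x_2$ of $B$ with $B>0$ on $(x_1,x_2)$ and $B'(x_1)=-B'(x_2)=\delta>0$, we introduced the function $\hat F(x)=B(x)(x^2+1)^{m/2}/(\lambda x)$ (or $\lambda\hat F$ when $\lambda=0$), showed that neither $x_1$ nor $x_2$ can vanish, and reduced the smoothness condition (\ref{eq:smooth}) to the dichotomy $(1-\gamma/(x_1x_2))(x_1+x_2)=0$, where $\gamma=-B(0)/\lambda$. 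The whole of Lemma \ref{parity_lemma} consisted in ruling out the branch $x_1x_2=\gamma$ in every remaining sign configuration of $(\lambda, m, \gamma)$, forcing $x_1+x_2=0$. So the first assertion, $x_1=-x_2$, is literally the conclusion of that argument and needs only to be quoted.

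Next I would deduce $B(0)>0$. Since $x_1<x_2$ and $x_1=-x_2$, we have $x_1<0<x_2$, so $x=0$ lies strictly inside the interval $(x_1,x_2)$ on which $B>0$ by hypothesis; hence $B(0)>0$ immediately. (Equivalently one can note $B(0)=\alpha$ from (\ref{Bode}), but the interval argument is cleaner and needs no formula.) For completeness one should also record consistency with the computations in the proof: when $\lambda\neq 0$ one has $\gamma=-B(0)/\lambda$, and indeed all the configurations in which $\gamma$ has "the wrong sign" for positivity of $B$ on the interval were among those eliminated, so no contradiction arises.

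The only genuine loose end is the case $m=-1$, where the substitution $\hat F$ has a different explicit form; but the proof of Lemma \ref{parity_lemma} explicitly states that imposing the conditions of Lemma \ref{lem:gsmooth} again yields $x_1=-x_2$ by arguments parallel to the generic case, with only the $\lambda>0,\ \gamma<0$ subcase needing modification. I would simply invoke that. In short, there is no new obstacle here: the corollary is a bookkeeping consequence of the exhaustive case split already performed, and the one-line observation that $0\in(x_1,x_2)$ when $x_1=-x_2$ delivers $B(0)>0$. The main thing to be careful about is to make sure the statement "$x_1=-x_2$" is asserted for \emph{all} admissible $(m,\lambda)$ — i.e. to confirm that the $\lambda=0$ bullet, the three $\lambda<0$ sub-bullets, the four $\lambda>0$ sub-bullets, and the $m=-1$ remark together exhaust the parameter range allowed by Proposition \ref{prop2} — which they do.
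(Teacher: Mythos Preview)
Your proposal is correct and matches the paper's own treatment: the corollary is stated there as a direct byproduct of the case analysis in the proof of Lemma~\ref{parity_lemma}, and your deduction of $B(0)>0$ from $0\in(x_1,x_2)$ is exactly the intended one-line observation. The only unnecessary step is your appeal to Proposition~\ref{prop2} at the end---the case split in Lemma~\ref{parity_lemma} already covers \emph{all} $(m,\lambda)$ with $m\neq 0$, not merely those compatible with Gauss--Bonnet, so no exhaustiveness check against the topological constraints is needed.
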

\begin{prop}
\label{prop_zeros}
The function $B$ given by (\ref{hyper_geom}) and $b=0$ has two simple zeros and is positive between these zeros  
 iff  
\begin{center}
    \begin{tabular}{c|c|c|c}
   & $\lambda=0$ & $\lambda>0$ &$\lambda<0$\\
   \hline
 $m>0$ & $c>0$ & $c>\frac{\lambda}{m+1}$ &$c>\frac{|\lambda|}{m+1} c_0$\\  
 $m\in(-1,0)$ & - & $c>\frac{\lambda}{m+1}$ &-\\  
 $m=-1$ & - &  $c>0$ & -\\
 $m<-1$  & - & $c\in\Big(\frac{\lambda}{m+1},0\Big)$ &-
    \end{tabular}
\end{center}
 where\footnote{The asymptotic formula (\ref{asymptoticF}) implies that the minimum exists.}
\[
c_0=\min_{x>x_0} \frac{(x^2+1)^{\frac{m}{2}+1}}{|F(x)|}
\]
and $x_0$ is the unique positive zero of $F$.
\end{prop}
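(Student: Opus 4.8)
\emph{Proof strategy.} The plan is to use the first-order ODE \eqref{Bode} satisfied by $B$ to reduce the statement to the sign behaviour of a single auxiliary function, and then read off the constraints on $c$ by a short case analysis. Throughout write $\alpha=B(0)$ as in \eqref{Bode}, so $\alpha=c-\lambda/(m+1)$ for $m\neq-1$ and $\alpha=c$ for $m=-1$. Since $b=0$ the function $B$ is even, hence $B'$ is odd and the regularity condition \eqref{eq:smooth} of Lemma~\ref{lem:gsmooth} holds automatically for any pair of zeros $\pm x_2$. By Corollary~\ref{cor_parity} the adjacent zeros provided by Lemma~\ref{lem:gsmooth} are necessarily of this form and satisfy $B(0)>0$, i.e.\ $\alpha>0$; conversely, if $\alpha>0$ and $B$ has a positive zero, then its smallest positive zero $x_2$ satisfies $B>0$ on $(-x_2,x_2)$, so $\pm x_2$ is an admissible pair as soon as $x_2$ is simple. (When $\lambda<0$ and $m>0$ there turn out to be two further zeros further out, but these are immaterial for the extension to $S^2$.) Hence it suffices to decide, in each case, whether $\alpha>0$ and $B$ has a simple positive zero.

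To this end I would set $\Phi(x):=x^{-1}(x^2+1)^{m/2}B(x)$ for $x>0$, so that $B(x)=x(x^2+1)^{-m/2}\Phi(x)$; on $(0,\infty)$ the zeros and the sign of $B$ coincide with those of $\Phi$, a zero of $B$ is simple iff the corresponding zero of $\Phi$ is, and \eqref{Bode} reads $\Phi'(x)=-x^{-2}(1+x^2)^{m/2}(\alpha+\lambda x^2)$. Since $\alpha=B(0)>0$ we have $\Phi(x)\to+\infty$ as $x\to 0^{+}$. If $\lambda\ge 0$ then $\Phi'<0$ on all of $(0,\infty)$, so $\Phi$ is strictly decreasing and has a positive zero --- necessarily simple --- precisely when $\lim_{x\to\infty}\Phi(x)<0$. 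If $\lambda<0$, then $m<0$ is ruled out by Proposition~\ref{prop2} (the cases $m<0,\ \lambda\le 0$ are incompatible with $M=S^2$), so $m>0$; in this case $\Phi$ is strictly decreasing on $(0,\sqrt{\alpha/|\lambda|})$ and strictly increasing afterwards, with $\Phi\to+\infty$ at both ends of $(0,\infty)$, so $\Phi$ has a positive zero iff its minimum is strictly negative, and then it has two simple zeros $0<x_2'<x_2''$ with $B>0$ on $(-x_2',x_2')$.

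For $\lambda\ge 0$ I would compute $\lim_{x\to\infty}\Phi(x)$ from $\Phi(x)=x^{-1}\bigl(cF(x)-\tfrac{\lambda}{m+1}(x^2+1)^{m/2+1}\bigr)$ using the asymptotics \eqref{asymptoticF} of $F$. When $m>-1$ (and, by direct inspection of the $m=-1$ branch of \eqref{hyper_geom}, also when $m=-1$) the cosmological term dominates and is eventually negative for $\lambda>0$, while for $\lambda=0$ and $m>0$ one has $\Phi=cF(x)/x$, whose limit has the sign of $-c$; in all these cases $\lim_{x\to\infty}\Phi<0$ whenever $\alpha>0$, so the condition reduces to $\alpha>0$ --- that is $c>\lambda/(m+1)$ for $m\ne-1$ (equivalently $c>0$ when $\lambda=0$) and $c>0$ for $m=-1$, matching every $\lambda\ge 0$ entry with $m\ge-1$. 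When $m<-1$ the cosmological term tends to $0^{+}$ while $cF(x)/x$ tends to $c\sqrt{\pi}\,\Gamma(\tfrac{1-m}{2})/\Gamma(-\tfrac{m}{2})$, a \emph{positive} multiple of $c$ by \eqref{asymptoticF}; hence $\lim_{x\to\infty}\Phi<0$ requires in addition $c<0$, giving $c\in(\lambda/(m+1),0)$, the bottom row.

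For $\lambda<0$ (so $m>0$) the condition ``$\min\Phi<0$'' says exactly that $B$ is negative somewhere on $(0,\infty)$. Since $\alpha>0$ forces $c>0$ and $F\ge 0$ on $[0,x_0]$, where $x_0$ is the unique positive zero of $F$ (Lemma~\ref{lemmahc2}), this can only happen for $x>x_0$, where $B(x)<0$ rearranges to $c>\tfrac{|\lambda|}{m+1}(x^2+1)^{m/2+1}/|F(x)|$; minimising the right-hand side over $x>x_0$ (the minimum $c_0$ exists by the asymptotics of $F$) gives exactly $c>\tfrac{|\lambda|}{m+1}c_0$, which in turn forces $\alpha>0$, and the borderline value $c=\tfrac{|\lambda|}{m+1}c_0$ is excluded because there $B\ge 0$ with a single, non-simple zero. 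Finally the ``$-$'' entries of the table are precisely the cases $m<0,\ \lambda\le 0$, in which Proposition~\ref{prop2} forbids $M=S^2$ outright, so no admissible $c$ exists. The only real obstacle is the asymptotic bookkeeping through \eqref{asymptoticF}: the delicate point is the range $m<-1,\ \lambda>0$, where $\lim_{x\to\infty}\Phi$ is a nonzero multiple of $c$ rather than $\pm\infty$ and so produces the extra upper bound $c<0$; and the regime $\lambda<0,\ m>0$, where the criterion is intrinsically non-explicit and must be phrased through $c_0$.
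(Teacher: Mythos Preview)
Your approach is correct and in fact more streamlined than the paper's. Both hinge on the same auxiliary function $\Phi(x)=(x^2+1)^{m/2}B(x)/x$ (which the paper calls $P$ in its Case~2), but you exploit the first-order ODE \eqref{Bode} for $\Phi$ systematically from the outset: this immediately yields the monotonicity/critical-point structure of $\Phi$ and hence the number and simplicity of its zeros in every case. The paper instead works primarily with $\hat B=(x^2+1)^{m/2}B$, comparing it term-by-term with $cF$ and the cosmological term in each of four separate cases, and only introduces $\Phi$ at the end of the $\lambda<0$ analysis to check simplicity. Your treatment of the ``$-$'' entries via Proposition~\ref{prop2} is a legitimate indirect argument; the paper's proof does not explicitly address them either.

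There is one genuine slip. In the $\lambda<0$, $m>0$ paragraph you assert that ``$\alpha>0$ forces $c>0$'', but $\alpha=c+|\lambda|/(m+1)>0$ only gives $c>-|\lambda|/(m+1)$. What you actually need (and what the paper supplies at the start of its Case~2) is that if $\alpha>0$ and $c\le 0$ then $B>0$ everywhere, so the range $c\le 0$ contributes nothing. This is immediate: for $m>0$ the function $F$ is decreasing, so $cF$ is non-decreasing when $c\le 0$, and $\frac{|\lambda|}{m+1}(x^2+1)^{m/2+1}$ is increasing; hence $\hat B$ is non-decreasing with $\hat B(0)=\alpha>0$, giving $B>0$. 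Once $c>0$ is in hand, your observation that $B<0$ can only occur for $x>x_0$, and the subsequent rearrangement and minimisation yielding $c>\frac{|\lambda|}{m+1}c_0$, are correct.
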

\begin{proof}
We shall prove Proposition \ref{prop_zeros} by analysing all
cases separately. It is convenient to introduce  $\hat{B}(x):= (1+x^2)^{m/2}B(x)$ which has the same zeros and sign as $B$.

\subsection*{ Case 1. $m>0, \lambda\geq 0$} From the formula (\ref{hyper_geom})  we have
$B(0)=c-\lambda/(m+1)$, so by Corollary \ref{cor_parity} 
\[
c>\frac{\lambda}{m+1}.
\]
In particular, $c>0$.  The zeros of $B$ coincide with those of 
\[
 \hat{B}(x) = c F(x)-  \frac{\lambda}{(m+1)} (x^2+1)^{\frac{m}{2}+1}.
\]
Note that $\hat{B}(0)>0$, and $\hat{B}(x)\leq c F(x)$
for all $x>0$. Therefore, by Lemma \ref{lemmahc2},  it follows that $B$ has precisely two roots, and $B$ is positive between these roots. A representative graph of $B$ is given in the figure below.
\begin{center}
{\includegraphics[width=4.9cm,height=4.9cm,angle=0]{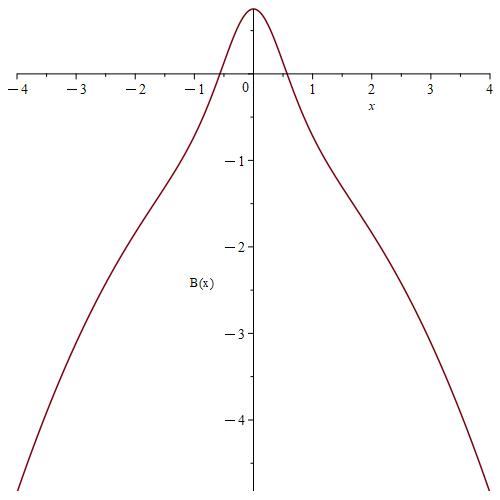}}\\\nopagebreak
{$m=3, c=1, \lambda=1$.}
\end{center}
\subsection*{Case 2. $m>0, \lambda<0$} In this case
\[
B(0)=c+\frac{|\lambda|}{m+1} \; ,
\]
and we require $B(0)>0$.
The zeros of $B$ coincide with those of 
\[
\hat{B}(x)= c F(x)+
\frac{|\lambda|}{(m+1)}(x^2+1)^{\frac{m}{2}+1}.
\]
If $c \leq 0$ then $\hat{B}(x)$ for $x>0$ is a sum of  increasing functions (Lemma \ref{lemmahc2}) and since $\hat{B}(0)>0$ it follows that $B$ has no zeros.
Now assume that $c>0$ which guarantees $B(0)>0$. Let $x_0$ be the unique positive 
zero of $F$, so that $F(x)<0$ for all $x>x_0$. To ensure that 
$B(x)<0$
for some $x>x_0$, and hence that $B$ has a zero for some $x>x_0$, it is necessary and sufficient that
\be
\label{case2c}
c>\frac{|\lambda|}{m+1}\min_{x>x_0} C(x),
\quad
\mbox{where}\quad  C(x)=\frac{(x^2+1)^{\frac{m}{2}+1}}{|F(x)|}.
\ee
The function $C(x)$ clearly has a minimum, as it is continuous
for $x\in (x_0, \infty)$, and (using the asymptotic form
(\ref{asymptoticF}))
tends to $\infty$ as $x$ tends to $x_0$ or $\infty$. A representative graph of $B$ is given in the figure below.
\begin{center}
{\includegraphics[width=4.9cm,height=4.9cm,angle=0]{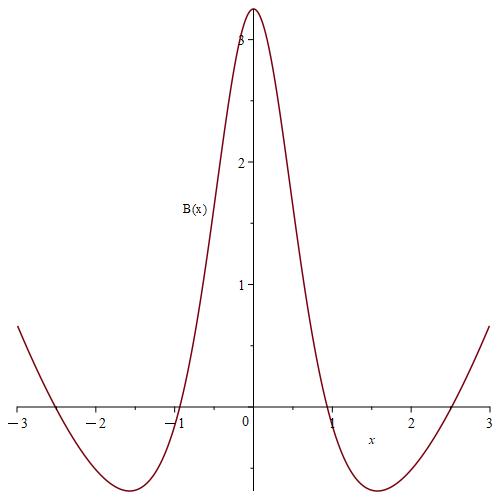}}\\\nopagebreak
{$m=3, c=3, \lambda=-1$.}
\end{center}

Finally we shall demonstrate that the two roots of $B$ are simple. Set $P=Bh$, where $h:(0, \infty)\rightarrow \R^+$. 
Then $B$ has a positive double zero at $x_2>0$ iff $P$ has a positive double zero at $x_2$.
Now take $h=(x^2+1)^{m/2}/x$. Then $P'(x)$ is given by the identity (\ref{Bode}), and we find that it vanishes only at $x_2=(B(0)/|\lambda|)^{1/2}$.
But $P$ tends to $\infty$ as $x$ tends to $0$ and $\infty$, and we have picked a $c$ such that 
$P$ is negative somewhere. Therefore its minimum must be negative. So it does not have a  double zero, and neither does $B$. 
\subsection*{Case 3. $m<0, \lambda>0$}
Reversing the arguments in the proof of Lemma \ref{lemmahc2}
shows that $F(x)$ is monotonically increasing faster that a linear function, and in particular that it tends
to $\infty$ from $F(0)=1$. There are now two cases to consider:
\begin{itemize} 
\item{\bf{Case 3a. $m<-1$}.}  The zeros of $B$ coincide with those of
\be
\label{hatF2}
\hat{B}(x)=c F(x) + \frac{\lambda}{|m+1|}(x^2+1)^{\frac{m}{2}+1} ,
\ee
and we have $\hat{B}(0)=B(0)>0$.
If $c\geq 0$ then $cF$ is increasing, and since $\hat{B}(x) > cF(x)$ this implies $\hat{B}$ and hence $B$ has no zeros.
Thus we assume that $c<0$.
Therefore $cF(x)$ is decreasing faster than a linear function, and $cF(0)=c<0$. 
We have that $\hat{B}(0)>0$ iff
\[
c\in\Big(\frac{\lambda}{m+1}, 0\Big).
\]
If $m<-1$ then $(x^2+1)^{\frac{m}{2}+1}<x$
for sufficiently large positive $x$. Therefore
the first term in (\ref{hatF2}) is decreasing faster than a linear function, while the second term is either not increasing or increasing slower than $x^{1-\epsilon}$ for some 
$\epsilon>0$. Thus $\hat{B}\rightarrow -\infty$
as $x\rightarrow\infty$, and in particular it has
two zeros. These are also the zeros of $B$. A representative graph of $B$ is given below.
\begin{center}
{\includegraphics[width=4.9cm,height=4.9cm,angle=0]{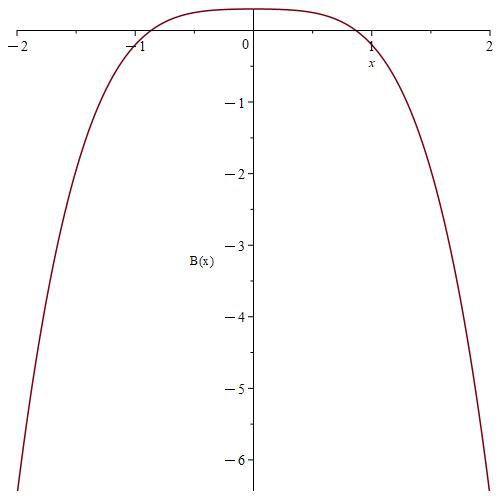}}\\\nopagebreak
{$m=-3, c=-1/5, \lambda=1$.}
\end{center}
\item{\bf{Case 3b. $m\in (-1, 0)$}.} We have $B(0)>0$ iff 
\be
\label{case3bb}
c>\frac{\lambda}{m+1}>0 \; .
\ee
The zeros of $B$ coincide with those of
\[
\hat{B}(x)= c F(x) - \frac{\lambda}{m+1} (1+x^2)^k,   \quad\mbox{where}\quad k=\frac{m}{2}+1\in\Big(\frac{1}{2}, 1\Big).
\]
Using the asymptotic form (\ref{asymptoticF}) we see that the first term in $\hat{B}$ grows linearly for large $x$, whereas the second term in $\hat{B}$  decreases as $x^{m+2}$, and therefore $\hat{B}\to -\infty$ as $x \to \infty$. Therefore, $\hat{B}$ and hence $B$ has a zero. A representative graph of $B$ is given below. 
\begin{center}
{\includegraphics[width=4.9cm,height=4.9cm,angle=0]{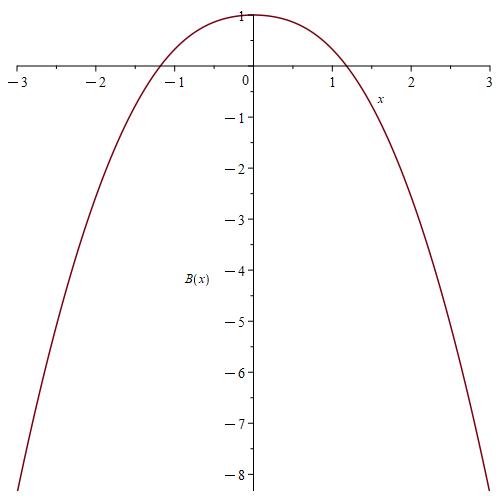}}\\\nopagebreak
{$m=-1/2, c=3, \lambda=1$.}
\end{center}
\end{itemize}
\subsection*{Case 4. $m=-1, \lambda>0$} 
Consider the explicit formula for $B$ in (\ref{hyper_geom}) corresponding to $m=-1$, and
take $c>0$. Therefore $F(0)=c>0$, and the existence two zeros follows as 
$x \arcsinh(x)$ grows faster that any constant multiple of $\sqrt{x^2+1}$. A representative graph is below.
\begin{center}
{\includegraphics[width=4.9cm,height=4.9cm,angle=0]{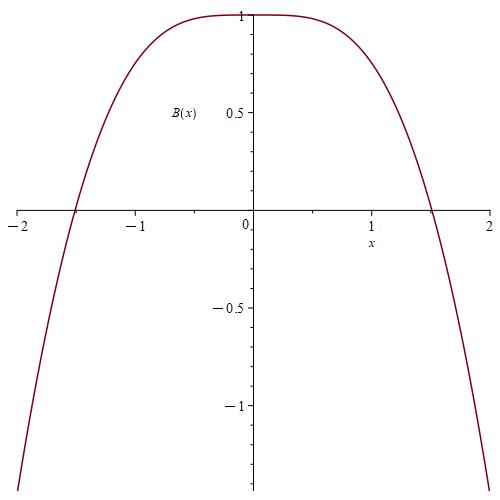}}\\\nopagebreak
{$m=-1, c=1, \lambda=1$.}
\end{center}
\end{proof}

We are now ready to prove our main theorem. \\

\noindent
{\em Proof of Theorem \ref{main_theo_md}.}
The Theorem now follows from the statements of Theorem \ref{theorem_normal_K}, Proposition
\ref{prop_zeros}, and Lemmas \ref{lem:gsmooth}
and \ref{parity_lemma} as follows:
Theorem \ref{theorem_normal_K} gives the local normal form (\ref{B_met}) and \eqref{hyper_geom} under an assumption that  $g$ has an axial Killing vector.  Lemma \ref{lem:gsmooth}  gives necessary and sufficient conditions on $B$ for the
metric to extend to $S^2$, and Proposition \ref{prop_zeros} then shows that $B$ given by 
(\ref{hyper_geom}) with $b=0$ satisfies the conditions this Lemma. 
Finally all axi-symmetric solutions on $S^2$ arise from (\ref{B_met}) and (\ref{hyper_geom}) with $b=0$ as a consequence of  Lemma \ref{parity_lemma} which showed that $B$ must be even. \koniec

\section{Projective metrizability and quasi-Einstein with $m=-1$}
\label{section_proj}
In this section we shall rule
out the existence of non-trivial solutions
to the quasi-Einstein equation (\ref{QEE}) with $m=-1$ and $\lambda =0$
on any compact orientable surface. 
\vskip 2pt
\noindent
{\em Proof of Theorem \ref{mminus1}.}
Let $(g, X)$ satisfy the quasi-Einstein equation
  (\ref{QEE}) on a surface $M$.
Define
an affine connection
$D$ (possibly with torsion) by
  \be
    D=\nabla -p\; \xb \otimes \mbox{Id} -q\; \mbox{Id}\otimes \xb
    \ee
    where $\nabla$ is the Levi-Civita connection of the metric $g$
    and $(p, q)$ are constants.
    The torsion of this connection is 
    $(p-q) \mbox{Id} \wedge \xb $, and the skew part of the Ricci
    tensor is $-\tfrac{1}{2}(2p+q)\td\xb$. 
    We verify by explicit
    calculation that the connection
    $D$ has vanishing Ricci tensor iff
    $m=-1, p=-\frac{1}{2}, q=1$. We now refer to a result of Milnor \cite{milnor} which states that
    if a  closed surface admits a flat affine connection, then $M$ is diffeomorphic to a Klein bottle, or a torus. The Klein bottle is not orientable, which leaves $M=T^2$. It now follows from Proposition \ref{prop2} that $X=0$
    and the metric $g$ is flat.
 \koniec
The importance of this result stems from projective differential geometry \cite{eastwood}, where it
is an open problem to determine all affine connections  with skew-symmetric
Ricci tensor which share the unparametrised geodesics with a Levi-Civita
connection of some (pseudo) Riemannian metric. 
\begin{definition}
A projective structure $[\nabla]$ is an equivalence class of torsion-free affine connections which share
the same unparametrised geodesics. Two affine connections $\nabla$ and $\hat{\nabla}$ belong to the same projective structure iff there exists $\Upsilon\in \Lambda^1(M)$ such that
\[
\hat{\nabla}=\nabla+\mbox{Id}\otimes \Upsilon+\Upsilon\otimes\mbox{Id}.
\]
A projective structure is called:
\begin{itemize} 
\item {\em metrisable} if it contains a Levi-Civita
connection of some metric in its projective class.
\item {\em skew} if it contains a 
representative connection with totally skew-symmetric Ricci tensor.
\end{itemize}
\end{definition}
The Schouten tensor of a connection $\nabla$ is defined in terms of the Ricci tensor
by
\[
\Rho_{ab}=\frac{1}{n-1}R_{(ab)}+\frac{1}{n+1}R_{[ab]},
\]
and is related to the Schouten tensor of $\hat{\nabla}$ by
\be
\label{schouten}
\hat{\Rho}_{ab}={\Rho}_{ab}-\nabla_b\Upsilon_a+\Upsilon_a\Upsilon_b.
\ee 
In case of two dimensional projective structures, the metrisability is obstructed by three
projective differential invariants \cite{BDE}, which are differential 
expressions of order at least five in components of a chosen connection in the projective class. 
The skew projective structures have been partially characterised by projective differential invariants \cite{randal,krynski,nur}. 
We aim to find metrisable connections projectively equivalent to skew connections.
Assume that $\nabla$ is the Levi-Civita connection of some metric $g$, and seek
a one-form $\Upsilon$ such that the Ricci tensor of $\hat{\nabla}$ is skew. Therefore $\hat{\Rho}_{(ab)}=0$, and the formula
(\ref{schouten}) with $\Upsilon=\frac{1}{1-n}X^{\flat}$ yields
\be
\label{Rpro}
R_{ab}=-\frac{1}{n-1}X_aX_b-\nabla_{(a}X_{b)}
\ee
which is the quasi-Einstein equation (\ref{QEE}) with $m=1-n$ and $\lambda=0$.
It is known \cite{derdzinski} that compact surfaces admitting connections with
skew-symmetric Ricci  are diffeomorphic to a torus or a Klein bottle. Theorem \ref{mminus1} directly shows
that the torus is necessarily flat if the connection is additionally metrisable. 

\section*{Appendix}
\appendix
\renewcommand{\theequation}{A.\arabic{equation}}
\renewcommand{\thetheorem}{A.\arabic{equation}}
\setcounter{equation}{0}
In \cite{DL23} it was shown that the quasi-Einstein-equations with $m=2$ on a closed $n$-dimensional manifold $M$, and with a non-gradient 
$X$ imply the existence of a Killing vector 
with a dual one-form  $K^{\flat}=\td \Gamma+\Gamma X^{\flat}$ for some smooth and positive function $\Gamma$. If additionally $n=2$ or $\lambda\leq 0$ then this Killing
vector necessarily commutes with $X$. This result was sufficient to prove the rigidity of the extreme Kerr horizon with or without the cosmological constant. In this appendix we shall use the Fredholm alternative
to show that $K$ commutes with $X$ in any dimension  for {\it any} sign of the cosmological constant. 
\begin{theorem}
\label{theoapp}
Let $(M, g, X)$ be a compact quasi-Einstein manifold
with $m=2$, and  $X^\flat$ non-gradient. 
Suppose that $(M, g)$ admits a Killing vector field of the form $K^{\flat}=\mathrm{d}\Gamma+\Gamma X^{\flat}$
for some smooth and positive function $\Gamma$. Then $[K, X] = 0$. 
\end{theorem}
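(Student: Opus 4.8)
The plan is to reduce the claim $[K,X]=0$ to the vanishing of a single scalar function and then kill that scalar by the Fredholm alternative applied to an elliptic operator on $M$. Put $Y:=[K,X]=\mathcal{L}_K X$ and $\nu:=K(\Gamma)$. Since $K$ is Killing we have $\mathcal{L}_K K^\flat=0$ and $\mathcal{L}_K X^\flat=Y^\flat$, so Lie-deriving $K^\flat=\mathrm{d}\Gamma+\Gamma X^\flat$ yields
\[
\Gamma\, Y^\flat=-\mathrm{d}\nu-\nu X^\flat ,
\]
and since $\Gamma>0$ it therefore suffices to show $\nu\equiv0$. Note also that $K\not\equiv0$, for otherwise $K^\flat=0$ would force $X^\flat=-\mathrm{d}\ln\Gamma$, contradicting that $X^\flat$ is non-gradient.

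Next I would produce a second-order equation for $\nu$. The Killing condition for $K$ combined with the $m=2$ equation (\ref{QEE}) gives the Hessian identity $\nabla_a\nabla_b\Gamma=-\nabla_{(a}\Gamma\,X_{b)}-\tfrac12\Gamma X_aX_b+\Gamma R_{ab}-\lambda\Gamma g_{ab}$, whose trace is a scalar relation between $\Delta\Gamma$, $X(\Gamma)$, $|X|^2$ and $R$. Applying $\mathcal{L}_K$ to that trace relation — using that $\mathcal{L}_K$ commutes with $\Delta$ and annihilates $R$ and $g$ — and then eliminating $Y^\flat$ via $\Gamma Y^\flat=-\mathrm{d}\nu-\nu X^\flat$, produces a linear elliptic equation for $\nu$. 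The key step is now to re-insert the trace of the $\Gamma$-Hessian identity to remove the curvature and to pass from $\nu$ to $\mu:=\nu/\Gamma=K(\ln\Gamma)$; after this the equation collapses to the divergence form
\[
\mathrm{div}\big(\Gamma\,\nabla\mu\big)=\Gamma^{-1}|K|^2\,\mu .
\]
I expect this collapse to be the main calculational obstacle: one must organise the symmetrised products $\nabla_{(a}\Gamma\,X_{b)}$, re-insert the $\Gamma$-equation so the $R_{ab}$ and the $\nabla\Gamma\odot X^\flat$ terms cancel, and check that the remaining potential assembles exactly into $\Gamma^{-2}|K|^2$. This is precisely where the signs must cooperate, and the argument is insensitive to the sign of $\lambda$ exactly because the curvature never survives as an independent zeroth-order coefficient — which is what lets it go beyond the $\lambda\le0$ case of \cite{DL23}.

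Finally I would conclude via the Fredholm alternative for $\mathcal{M}u:=\mathrm{div}(\Gamma\nabla u)-\Gamma^{-1}|K|^2 u$, an elliptic self-adjoint operator on the compact $M$. Since $\Gamma>0$ and $|K|^2\ge0$,
\[
\int_M u\,\mathcal{M}u\,\mathrm{vol}=-\int_M\big(\Gamma|\nabla u|^2+\Gamma^{-1}|K|^2u^2\big)\mathrm{vol}\le0,
\]
with equality only if $u$ is constant and $|K|u\equiv0$; since $K\not\equiv0$ this forces $u\equiv0$, so $\mathcal{M}$ has trivial kernel. Hence $\mathcal{M}\mu=0$ implies $\mu\equiv0$, so $\nu\equiv0$ and therefore $Y=[K,X]=0$. (Working with $\nu$ rather than $\mu$ leads to the same integral identity after an integration by parts in which the $\Delta\Gamma$-terms cancel; the substitution $\mu=\nu/\Gamma$ only makes the positivity manifest.)
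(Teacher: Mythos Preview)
Your proposal is correct and follows essentially the same route as the paper: both reduce to showing $\nu:=\mathcal{L}_K\Gamma=0$, derive the same second-order elliptic equation for $\nu$ by Lie-deriving the trace identity for $\Gamma$ and eliminating $\mathcal{L}_KX$, and then show the relevant operator has trivial kernel. The only difference is in this last step. The paper leaves the equation in the non-self-adjoint form $L\nu=0$ with $L\psi=-\Delta\psi+\nabla_a(\Gamma^{-1}\nabla^a\Gamma\,\psi)+\Gamma^{-2}|K|^2\psi$, invokes the Fredholm alternative to pass to $\ker L^*$, and applies the strong maximum principle to $L^*$. Your substitution $\mu=\nu/\Gamma$ is exactly the conjugation $L\mapsto \Gamma^{-1}L(\Gamma\,\cdot\,)$ that symmetrises $L$ into the divergence-form operator $\mathcal{M}u=\mathrm{div}(\Gamma\nabla u)-\Gamma^{-1}|K|^2u$; the triviality of $\ker\mathcal{M}$ then follows from the one-line energy identity you wrote, with no appeal to either Fredholm theory or the maximum principle. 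This is a slightly more elementary (and self-contained) finish, but the underlying mechanism --- the potential assembling into the nonnegative $\Gamma^{-2}|K|^2$, hence independence from the sign of $\lambda$ --- is identical.
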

\begin{proof}
Recall that (see Remark 2.6 in \cite{DL23})   
$[K, X] = 0$ if and only if $\mathcal{L}_K \Gamma = 0$. Consider the linear second order elliptic operator
\begin{equation}
    L\psi := -\Delta \psi     + \nabla_a( (\Gamma^{-1}\nabla^a\Gamma) \psi)+ \Gamma^{-2}|K|^2  \psi
\end{equation}
with $\psi \in C^\infty(M)$. We will show that the kernel of $L$ is trivial and $L(\mathcal{L}_K\Gamma) = 0$, which implies the result.

Lie deriving the equation $0 = \nabla_a K^a = \nabla_a(\Gamma X^a) + \Delta \Gamma$ along $K$ using the fact that $K$ is Killing gives
\begin{equation} \label{lie}
    0 = \Delta (\mathcal{L}_K\Gamma) + \nabla_a(X^a\mathcal{L}_K\Gamma) + \nabla_a(\Gamma\mathcal{L}_KX^a).
\end{equation}
Lie deriving the trace of the quasi-Einstein equation \eqref{QEE} along $K$ shows that $\nabla_a(\mathcal{L}_KX^a) = X_a\mathcal{L}_KX^a$. Combining with (\ref{lie}) we obtain
\begin{equation} \label{liecomb}
    0 = \Delta (\mathcal{L}_K\Gamma) + X^a\nabla_a(\mathcal{L}_K\Gamma) + (\mathcal{L}_K\Gamma) \nabla_a X^a + K_a\mathcal{L}_K X^a.  
\end{equation}
We can express $\mathcal{L}_K X^a$ in terms of $\mathcal{L}_K \Gamma$ using $X^a = \Gamma^{-1}(K^a - \nabla^a \Gamma)$. Explicitly,
\begin{equation} \label{lkx}
    \mathcal{L}_K X^a = -\Gamma^{-1}(X^a\mathcal{L}_K\Gamma +\nabla^a(\mathcal{L}_K\Gamma)).
\end{equation}
Inserting (\ref{lkx}) into (\ref{liecomb}) and writing $X^a$ in terms of $K^a, \Gamma$ gives the equation $L(\mathcal{L}_K\Gamma) = 0$ where $L$ is the operator defined above.

It remains to prove that ker $L = \{0\}$. By the Fredholm alternative the dimension of ker $L$ is finite and equal to the dimension of ker $L^*$, where $L^*$ is the formal adjoint of $L$ in the $L^2$ inner product. In the case at hand, 
\begin{equation}
    L^*\psi = -\Delta \psi -( \Gamma^{-1}\nabla^a\Gamma)\nabla_a\psi + \vert K \vert^2\Gamma^{-2}\psi.
\end{equation}
This operator is of the form $L^*\psi = -\Delta \psi + B^a\nabla_a \psi + C\psi$ with $C \geq 0$.
Note that $C = \vert K \vert^2\Gamma^{-2}$ is not identically zero because $X^\flat$ is non-gradient. By the strong maximum principle (Theorem 2.9 in \cite{Kazdan1}) it follows that the kernel of $L^*$ is trivial, which concludes the proof.
\end{proof}
\subsection*{Conflict of interest statement}
On behalf of all authors, the corresponding author states that there is no conflict of interest.

\end{document}